\definecolor{webgreen}{rgb}{0,.5,0}
\definecolor{webbrown}{rgb}{.6,0,0}
\newcommand{\seqnum}[1]{\href{https://oeis.org/#1}{\rm \underline{#1}}}
\def\modd#1 #2{#1\ \mbox{\rm (mod}\ #2\mbox{\rm )}}
\begin{document}

\theoremstyle{plain}
\newtheorem{theorem}{Theorem}
\newtheorem{corollary}[theorem]{Corollary}
\newtheorem{lemma}[theorem]{Lemma}
\newtheorem{proposition}[theorem]{Proposition}

\theoremstyle{definition}
\newtheorem{definition}[theorem]{Definition}
\newtheorem{example}[theorem]{Example}
\newtheorem{conjecture}[theorem]{Conjecture}

\theoremstyle{remark}
\newtheorem{remark}[theorem]{Remark}

\title{Beatty Sequences for a Quadratic Irrational:  Decidability and Applications}

\def\Que{\mathbb{Q}}
\def\Enn{\mathbb{N}}
\def\Zee{\mathbb{Z}}
\def\AND{\ \wedge\ }
\def\suchthat{\, : \,}
\def\andd{\, \wedge \, }
\def\shift{{\tt shift}}

\author{Luke Schaeffer\\
Institute for Quantum Computing\\
University of Waterloo\\
Waterloo, ON N2L 3G1 \\
Canada\\
\href{mailto:lschaeff@uwaterloo.ca}{\tt lschaeff@uwaterloo.ca}
\and
Jeffrey Shallit and Stefan Zorcic\\
School of Computer Science\\
University of Waterloo\\ 
Waterloo, ON N2L 3G1\\
Canada\\
\href{mailto:shallit@uwaterloo.ca}{\tt shallit@uwaterloo.ca}\\
\href{mailto:szorcic@uwaterloo.ca}{\tt szorcic@uwaterloo.ca}
}

\maketitle

\begin{abstract}
Let $\alpha$ and $\beta$ be real numbers belonging to the same quadratic field.
We show that the inhomogeneous
Beatty sequence $(\lfloor n \alpha + \beta \rfloor)_{n \geq 1}$
is synchronized, in the sense that there is a finite automaton that takes as input
the Ostrowski representations of $n$ and $y$ in parallel, and
accepts if and only if $y =  \lfloor n \alpha + \beta \rfloor$.  Since it is already known that the addition relation is computable for Ostrowski representations based on a quadratic irrational,
a consequence is a new and rather simple proof that the first-order logical theory of these
sequences with addition is decidable.   The decision procedure is easily implemented in the free software
{\tt Walnut}.

As an application, we show
that for each $r \geq 1$ it is decidable whether the set 
$\{  \lfloor n \alpha + \beta \rfloor \suchthat n \geq 1 \}$
forms an additive basis (or asymptotic additive basis) of order $r$.
Using our techniques, we also solve some open problems of
Reble and Kimberling, and give an explicit characterization of a sequence of Hildebrand
et al.
\end{abstract}

\section{Introduction}

Let $\alpha$ be an irrational number.   The sequence
$(\lfloor n \alpha \rfloor)_{n \geq 1}$ is
known as a {\it Beatty sequence\/}
\cite{Beatty:1926} and has been very widely studied.\footnote{Some
authors \cite{Dekking:2023} insist that $\alpha$ must be greater than $1$, but
here we do not. Also, sometimes it is useful to allow the index $n$ to be $0$.}   The two most famous Beatty sequences correspond to
$\alpha = (1+\sqrt{5})/2$ and $\alpha= (3+\sqrt{5})/2$; they are 
also known as the
{\it lower Wythoff\/} and {\it upper Wythoff\/} sequences, respectively, and they
form sequences
\seqnum{A000201} and \seqnum{A001950} in the {\it On-Line Encyclopedia of Integer Sequences} (OEIS)
\cite{Sloane:2026}.

In an {\it inhomogeneous Beatty sequence}\footnote{Sometimes called {\it non-homogeneous Beatty sequence\/} in the literature; see \cite{Fraenkel:1969}.}, we permit a constant
additive factor
inside the floor brackets:  $(\lfloor n \alpha + \beta \rfloor)_{n \geq 1}$.
The first difference of such a sequence forms a
{\it Sturmian sequence}, which also has been widely
studied with a huge literature \cite{Lothaire:2002,Berstel&Lauve&Reutenauer&Saliola:2009}.

A different generalization of Beatty sequences was proposed by
Allouche and Dekking \cite{Allouche&Dekking:2019}.
This concerns sequences of the form
$(p \lfloor n \alpha \rfloor + qn + r)_{n \geq 0}$, where $p, q, r$ are
natural numbers.

In this paper, we show that if $\alpha$ and $\beta$ belong to the same quadratic field, then the inhomogeneous
Beatty sequence $(\lfloor n \alpha + \beta \rfloor)_{n \geq 1}$
is synchronized, in the sense that there is a finite automaton that takes
the Ostrowski representations of $n$ and $y$ in parallel, and
accepts if and only if $y =  \lfloor n \alpha + \beta \rfloor$.  Furthermore,
the converse of this theorem also holds.
Ostrowski representations are discussed in Section~\ref{ostrowski}; for an introduction and explanation of how automata can use them, see \cite{Baranwal&Schaeffer&Shallit:2021}. Synchronized sequences are discussed in Section~\ref{synch}; for a summary, see \cite{Shallit:2021h}.  As a consequence, we get a new and rather simple proof that first-order logical theory of quadratic Beatty sequences with addition is decidable.  History of this result is discussed in Section~\ref{history}, and our principal results
appear in Section~\ref{main}: Theorem~\ref{main1} and Corollary~\ref{main1a}.   Not only are these first-order statements decidable, but in practice many theorems about individual Beatty sequences can be proved in a matter of seconds with an implementation of the decision procedure in the free software {\tt Walnut} \cite{Mousavi:2016,Shallit:2023}.   {\tt Walnut} is available at \\
\centerline{\url{https://cs.uwaterloo.ca/~shallit/walnut.html} .}

In Section~\ref{additive}, we apply these results to problems of additive number theory.   We show
that for each $r \geq 1$ it is decidable whether the set
$\{  \lfloor n \alpha + \beta \rfloor \suchthat n \geq 1 \}$
forms an additive basis (or asymptotic additive basis) of order $r$.  

In Section~\ref{reble}
we use these ideas to solve some open problems of
Don Reble, and in Section~\ref{graham} we show a question of Graham is
decidable for quadratic irrationals.
In Section~\ref{hildebrand} we give an explicit
description of a sequence studied by Hildebrand, Li, Li, and Xie.   In Sections~\ref{kimberling} and \ref{sec11}, we solve some open problems of Kimberling.   
In Section~\ref{sec12}, 
we prove a number of identities about sequences in the OEIS.
In Section~\ref{fractional} we discuss fractional parts. Finally,
in Section~\ref{converse-sec} we
prove the converse (Theorem~\ref{main1-converse}) of Theorem~\ref{main1}.

\section{Previous work}
\label{history}

In this section, we briefly survey previous work.
Berstel \cite{Berstel:1982,Berstel:1986b} was apparently the first to show that a finite automaton can recognize the addition relation
$x+y=z$ when $x, y,$ and $z$ are given in Fibonacci
representation (see Section~\ref{ostrowski}).  This
was generalized to the case of certain kinds of linear
recurrences by Frougny \cite{Frougny:1988,Frougny:1991b,Frougny:1992a}.   Mousavi et al.\ \cite{Mousavi&Schaeffer&Shallit:2016} showed how to implement Fibonacci representations in the {\tt Walnut} theorem-proving system and, with it, proved many results in combinatorics on words ``purely mechanically" \cite{Du&Mousavi&Rowland&Schaeffer&Shallit:2017,Du&Mousavi&Schaeffer&Shallit:2016}.

A related representation, called Pell representation, was studied by Baranwal and Shallit \cite{Baranwal&Shallit:2019}; it is also implemented in {\tt Walnut}.

More generally, Hieronymi and Terry \cite{Hieronymi&Terry:2018} and, independently
Baranwal \cite{Baranwal:2020} and
Baranwal et al.\ \cite{Baranwal&Schaeffer&Shallit:2021} showed that a finite automaton can do addition in the Ostrowski numeration system (see Section~\ref{ostrowski}) for quadratic numbers.  This was implemented in {\tt Walnut} and also
{\tt Pecan}, another software system \cite{Hieronymi:2022}.

As a consequence, the first-order theory of $\Enn = \{0,1,2,\ldots\}$ together with the Sturmian
sequence ${\bf f} = 01001010 \cdots$ (the Fibonacci word, corresponding to $(3-\sqrt{5})/2$)
with addition is decidable.  This was observed by
Bruy\`ere and Hansel \cite{Bruyere&Hansel:1997}, building on work of Bruy\`ere et al.~\cite{Bruyere&Hansel&Michaux&Villemaire:1994}.  Later, these ideas were used by Mousavi et al.\ \cite{Mousavi&Schaeffer&Shallit:2016} to solve questions about
the properties of $\bf f$ and related words.  Also see \cite{Hieronymi:2016}.

More generally, the first-order theory of $\Enn$ together with the Sturmian sequence for any quadratic number is decidable.  This was first observed by Hieronymi and Terry \cite{Hieronymi&Terry:2018}, and independently by
Baranwal \cite{Baranwal:2020} and
Baranwal et al.\ \cite{Baranwal&Schaeffer&Shallit:2021}.  

Similarly, the first-order theory of $\Enn$ with addition  together with the
Beatty sequence for the golden ratio is decidable \cite{Mousavi&Schaeffer&Shallit:2016,Hieronymi:2016,Shallit:2023}.   Recently, Khani and Zarei \cite{Khani&Zarei:2023} proved decidability of this theory using a different technique, quantifier elimination.

More generally, the analogous decidability result is true for the Beatty sequence of any quadratic number.  This is implied
by the results of  Hieronymi et al.~\cite{Hieronymi:2022} but not explicitly stated there.  For related results, see \cite{Khani&Valizadeh&Zarei:2022,Gunaydin&Ozsahakyan:2022}.

The novelty of our results is the simplicity of the method (see, for example, Theorem~\ref{thm4} and Corollary~\ref{five}) and our applications to solving problems in the theory of Beatty sequences.

\section{Ostrowski \texorpdfstring{$\alpha$}{α}-representation}
\label{ostrowski}

In the famous Fibonacci (or Zeckendorf) system for representing natural numbers \cite{Lekkerkerker:1952,Zeckendorf:1972}, every non-negative integer is written uniquely as a sum of distinct Fibonacci numbers, subject to the condition that we never use two consecutive Fibonacci numbers.   More precisely, if the Fibonacci numbers are (as usual) $F_0 = 0$, $F_1 = 1$, and $F_n = F_{n-1} + F_{n-2}$ for $n \geq 2$, then a natural number $n$ can be expressed as a binary string $e_t \cdots e_1$ with interpretation
$n = \sum_{1 \leq j \leq t} e_j F_{j+1}$.  Notice that the 
most significant digit is written at the left.
\vskip .03in

The Ostrowski numeration system is a generalization of this idea.
Let $ \alpha > 0$ be an irrational real number, with continued
fraction $[a_0, a_1, a_2, \ldots ]$.   Define, as usual, the
$i$'th convergent $p_i/q_i$ to be $[a_0, a_1, \ldots, a_i]$.
Recall (e.g., \cite{Hardy&Wright:1985}) that $p_i = a_i p_{i-1} + p_{i-2}$ and
$q_i = a_i q_{i-1} + q_{i-2}$ with initial conditions
$p_{-2} = q_{-1} = 0$ and $q_{-2} = p_{-1} = 1$.

The {\it Ostrowski $\alpha$-numeration system\/} is a method for
uniquely expressing every natural number $n$ as an integer linear
combination of the $q_i$, namely
$\sum_{0 \leq i \leq t} e_i q_i$. Here, to obtain uniqueness of the representation,  the $e_i$ are restricted by the following
inequalities:
\begin{itemize}
\item[(a)] $0 \leq e_i \leq a_{i+1}$ for $i \geq 1$;
\item[(b)] $e_i = a_{i+1}$ implies $e_{i-1} = 0$ for $i \geq 1$;
\item[(c)] $0 \leq e_0 < a_1$.
\end{itemize}
See \cite{Ostrowski:1922} and \cite[\S 3.9]{Allouche&Shallit:2003}.
We write the canonical Ostrowski $\alpha$-representation of $n$, starting with
the most significant digit, and satisfying the rules (a)--(c) above, as
$(n)_\alpha$.   The inverse mapping, which sends a string
$x = e_t e_{t-1} \cdots e_0$ to the sum $\sum_{0 \leq i \leq t} e_i q_i$,
is denoted by $[x]_\alpha$.  The Fibonacci numeration system then is
a special case of Ostrowski $\alpha$-numeration, where 
$\alpha = (3-\sqrt{5})/2$.

In some cases we need to write the $\alpha$-representation of pairs or
tuples.  We do so by working over a product alphabet, and padding the
shorter representation at the front with $0$'s, if needed.

\section{Automata, synchronized sequences, and first-order logic}
\label{synch}

We say a sequence $(s_n)_{n \geq 0}$ is {\it automatic\/} if there
is a deterministic finite automaton with output (DFAO) that takes
as input the representation of $n$ in some addable numeration system, and
$s_n$ is the output associated with the last state reached
\cite{Allouche&Shallit:2003}.  Here by `addable', we mean that the
addition relation $x+y=z$ is recognizable by an automaton
for this numeration system; this is the case for base-$k$ where $k\geq 2$
is an integer, the Fibonacci and Tribonacci numeration systems, and many
others.  In particular, this is the case for Ostrowski $\alpha$-representation
when $\alpha$ is a quadratic irrational \cite{Baranwal:2020,Baranwal&Schaeffer&Shallit:2021}.
It is clear that such a sequence takes only finitely many values.

We say that a sequence $(s_n)_{n \geq 0}$ is {\it synchronized\/}
if there is a finite automaton that takes the representations of
$n$ and $y$ in parallel, and
accepts if and only if $y = s_n$.  In this paper, the particular representations
we use are the Ostrowski $\alpha$-representations discussed in the
previous section.  Moreover, in this paper, we always assume that $n$ and $y$
are represented in the same numeration system.

Synchronized sequences have a number of nice closure properties.   These can be found in \cite{Shallit:2021h} for $k$-synchronized sequences, but the same proofs also suffice for sequences defined in terms of Ostrowski representations.
\begin{proposition}
\leavevmode
\begin{itemize}
\item[(a)]  The sequence $(\lfloor (bn+c)/d \rfloor)_{n \geq 1}$ is synchronized for integers $b,c,d$ with $b \geq \max(1, -c)$ and 
$d \geq 1$.
\item[(b)] If $(f(n))_{n \geq 1}$ and $(g(n))_{n \geq 1}$
are both synchronized, then so is their termwise sum $(f(n)+g(n))_{n \geq 1}$.
\item[(c)] If $(f(n))_{n \geq 1}$ and $(g(n))_{n \geq 1}$
are both synchronized then so is
their composition $(f(g(n)))_{n \geq 1}$.
\end{itemize}
\label{prop1}
\end{proposition}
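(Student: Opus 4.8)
The plan is to reduce all three parts to the single fact, established in \cite{Baranwal&Schaeffer&Shallit:2021}, that the ternary addition relation $x + y = z$ is recognized by a finite automaton reading Ostrowski $\alpha$-representations in parallel. Once addition is available, I would invoke the standard logic--automata correspondence: the class of relations on $\Enn^k$ recognized by such automata is effectively closed under the Boolean operations (union, intersection, complement, matching $\vee$, $\andd$, $\neg$) and under projection onto a subset of the coordinates (matching existential quantification $\exists$, with the usual care about padding the shorter representations to equal length). Consequently, any relation whose graph is definable by a first-order formula whose atomic predicates are themselves recognized by automata is again recognized by an automaton; in other words, to prove a sequence $(s_n)$ synchronized it suffices to define its graph $\{ (n,y) \suchthat y = s_n \}$ by such a formula.

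For (b) and (c) this is immediate. Writing $R_f(n,y)$ and $R_g(n,y)$ for the synchronizing automata of $f$ and $g$, the termwise sum is synchronized by the formula
\[
\exists y_1\, \exists y_2 \ R_f(n,y_1) \andd R_g(n,y_2) \andd (y = y_1 + y_2),
\]
and the composition by
\[
\exists m \ R_g(n,m) \andd R_f(m,y).
\]
Both formulas use only the synchronized input relations, the addition relation, and existential quantification, so the closure properties above finish the argument.

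Part (a) requires slightly more care, but still reduces to a first-order formula over $+$. First, multiplication by a fixed constant is definable, since $bn$ is just the $b$-fold sum $n + n + \cdots + n$; likewise the order relation $u < v$ is $\exists w\, (w \neq 0 \andd u + w = v)$. The hypothesis $b \geq \max(1,-c)$ guarantees that $bn + c \geq b + c \geq 0$ for every $n \geq 1$, so $bn + c$ is a genuine natural number and is definable (when $c < 0$, as the $z$ with $z + (-c) = bn$). Finally, $y = \lfloor (bn+c)/d \rfloor$ is captured, after moving constants so that only additions appear, by the Euclidean-division condition
\[
\exists r \ (dy + r = bn + c) \andd (r < d),
\]
which is again first-order over $+$. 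Hence the sequence is synchronized.

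The one genuinely nontrivial ingredient is the logic--automata bridge invoked in the first paragraph: verifying that the Ostrowski automaton formalism really is closed under complementation and projection, and that these constructions are uniform in the length-padding convention. This is exactly the content imported from \cite{Baranwal&Schaeffer&Shallit:2021} together with the automaton-theoretic toolkit summarized in \cite{Shallit:2021h}; granting it, parts (a)--(c) are routine translations into first-order logic, and I expect no further obstacle.
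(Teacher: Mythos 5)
Your proposal is correct and matches the paper's approach: the paper gives no explicit proof, simply citing the closure properties from \cite{Shallit:2021h} together with the synchronized addition relation of \cite{Baranwal&Schaeffer&Shallit:2021}, and those cited proofs are exactly the first-order-definability arguments you spell out. Your explicit formulas for parts (a)--(c), including the Euclidean-division encoding of the floor, are the standard instantiations of that machinery and introduce no gap.
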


From the results of \cite{Bruyere&Hansel&Michaux&Villemaire:1994} we immediately get the following result.   
\begin{theorem}
There is an algorithm that takes as input
a first-order logical formula on natural
numbers for $f(n)$,
defined in terms of addition, subtraction, multiplication by constants,
automatic sequences, synchronized sequences, and computes a synchronized
automaton for $f$.
\label{decide}
\end{theorem}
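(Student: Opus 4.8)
The plan is to derive this as an instance of the general correspondence between first-order definability and finite-automaton recognizability in a numeration system in which addition is itself recognizable by an automaton. First I would recast the statement in terms of relations. Call a $k$-ary relation $R \subseteq \Enn^k$ \emph{$\alpha$-recognizable} if the set of $k$-tuples of Ostrowski $\alpha$-representations of its members, padded at the front with $0$'s to a common length and read in parallel over the product alphabet, forms a regular language. With this vocabulary, a sequence $(f(n))$ is synchronized precisely when its graph $\{(n,y) \suchthat y = f(n)\}$ is $\alpha$-recognizable, so the goal reduces to showing that every relation defined by a first-order formula built from addition, subtraction, multiplication by constants, and synchronized sequences is $\alpha$-recognizable, together with an effective construction of the witnessing automaton.

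I would then argue by structural induction on the formula $\varphi$. For the base cases: the addition relation $x+y=z$ is $\alpha$-recognizable by the result of Baranwal et al.\ \cite{Baranwal&Schaeffer&Shallit:2021} (equivalently Hieronymi and Terry \cite{Hieronymi&Terry:2018}); subtraction is the same relation with the tracks permuted; multiplication by a fixed constant $c$ is obtained either by iterating addition or directly from Proposition~\ref{prop1}(a); and each synchronized sequence occurring in $\varphi$ is $\alpha$-recognizable by hypothesis. For the inductive step, the logical connectives correspond to effective operations on finite automata: conjunction to intersection, disjunction to union, and negation to complementation, each of which preserves regularity and is computable. Existential quantification corresponds to projecting away one track of the product alphabet followed by determinization, and universal quantification is handled by De Morgan duality. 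Since every such operation is effective, the automaton for $\varphi$ can be assembled mechanically by recursing on its parse tree.

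The hard part, and the reason the result rests on the cited framework rather than being purely formal, is the interaction of projection (quantifier handling) with the numeration system. When a variable is quantified away, the projected automaton must continue to accept exactly on \emph{canonical} Ostrowski representations: the surviving tracks must be padded consistently, leading zeros must be tolerated or stripped, and one must ensure that ranging over all of $\Enn$—not merely over those values whose representations fit a bounded length—does not leave the regular class. It is precisely the recognizability of addition, via the Ostrowski digit-carry rules (a)--(c), that guarantees closure under these operations while respecting canonicity; this is the technical core of Bruy\`ere et al.\ \cite{Bruyere&Hansel&Michaux&Villemaire:1994}, and once it is in hand the induction proceeds verbatim as in the automatic-sequence setting surveyed in \cite{Shallit:2021h}. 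Applying the construction to a formula $\varphi(n,y)$ asserting $y = f(n)$ then yields an automaton whose accepted language is exactly the graph of $f$, that is, a synchronized automaton for $f$, as required.
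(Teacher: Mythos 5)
Your proposal is correct and is essentially the same argument the paper relies on: the paper gives no proof of its own, simply asserting that the theorem follows immediately from Bruy\`ere et al.\ \cite{Bruyere&Hansel&Michaux&Villemaire:1994}, and your structural induction (connectives as Boolean automaton operations, quantifiers as projection, with recognizability of Ostrowski addition carrying the base cases) is precisely the standard argument that citation encapsulates. You have merely written out in full what the paper leaves implicit.
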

Furthermore, the {\tt Walnut} theorem-prover can compute these synchronized automata.
For more information about {\tt Walnut}, see \cite{Mousavi:2016,Shallit:2023}.
Thus, to prove our main result, we only need to show that
the map $n \rightarrow \lfloor n \alpha + \beta \rfloor$
is synchronized.

As a consequence of Theorem~\ref{decide}, we get the following corollary.
Recall that the subword complexity (aka factor complexity) of a sequence
is the function counting the number of distinct contiguous blocks
appearing in it.
\begin{corollary}
Let ${\bf f} = (f(n))_{n \geq 0}$ be an automatic sequence.
Then its subword complexity function $\rho_{\bf f} (n)$ 
satisfies $\rho_{\bf f} (n) = O(n)$.
\label{four}
\end{corollary}

\begin{proof}
As explained in 
\cite[\S 10.5]{Shallit:2023}, given an automatic sequence ${\bf f} = (f(n))_{n \geq 0}$
there is a first-order logical formula for the
appearance function $A_{\bf f}(n)$, which is the length of the
shortest prefix of $\bf f$ containing an occurrence of each of its length-$n$
factors.  Hence, by Theorem~\ref{decide}, it follows that there is a
synchronized automaton for $A_{\bf f}(n)$.  Then from 
\cite[Thm.~8]{Shallit:2021h}, it follows that $A_{\bf f}(n) = O(n)$.  (The
proof there technically only was stated for $k$-synchronized sequences,
but it goes through in exactly the same way for the more general
notion of synchronized sequence over an addable numeration
system.)  Hence $\rho_{\bf f}(n)$ is also $O(n)$.
\end{proof}

\section{The main result and its implementation}
\label{main}

We need a simple lemma.
\begin{lemma}
Let $x$ be a real number and $c \geq 1$ an integer.
Then $\lfloor x/c \rfloor = \lfloor {{\lfloor x \rfloor} \over c} \rfloor$.
\label{simple}
\end{lemma}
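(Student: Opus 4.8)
The plan is to prove the two floors are equal by showing they satisfy the same defining inequality. I would introduce $m := \lfloor x/c \rfloor$ as the common value and verify directly that $\lfloor \lfloor x \rfloor / c \rfloor$ equals $m$ as well. The whole argument rests on exploiting the integrality of $c$ (and of $m$) to transfer bounds on $x$ to bounds on $\lfloor x \rfloor$.

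First I would unwind the definition of $m$: from $m = \lfloor x/c \rfloor$ we get $m \le x/c < m+1$, and since $c \ge 1$ we may multiply through by $c$ to obtain
\[
 mc \le x < mc + c .
\]
Here $mc$ is an integer not exceeding $x$, so by the definition of the floor we immediately have $mc \le \lfloor x \rfloor$. This is the lower bound.

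Next I would establish the matching upper bound. Since $\lfloor x \rfloor \le x < mc + c$, the integer $\lfloor x \rfloor$ is strictly less than the integer $mc + c$, and hence $\lfloor x \rfloor \le mc + c - 1 < mc + c$. Combining the two bounds gives $mc \le \lfloor x \rfloor < mc + c$, and dividing by the positive integer $c$ yields $m \le \lfloor x \rfloor / c < m+1$. By the definition of the floor this forces $\lfloor \lfloor x \rfloor / c \rfloor = m = \lfloor x/c \rfloor$, which is the claim.

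The only real subtlety — and it is a mild one — is the step where the strict inequality $x < mc + c$ must be converted into a strict inequality for $\lfloor x \rfloor$; this is precisely where integrality of $mc + c$ is needed, since for a non-integer upper bound the floor could in principle meet it. Everything else is routine manipulation of the floor inequalities, so I expect no genuine obstacle.
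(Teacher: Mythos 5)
Your proof is correct and takes essentially the same route as the paper: the paper writes $x = qc + r$ by Euclidean division and checks that both floors equal $q$, which is exactly your $m$ with $r = x - mc$. (One minor remark: the integrality of $mc + c$ is not actually needed to conclude $\lfloor x \rfloor < mc + c$, since this already follows from $\lfloor x \rfloor \le x < mc + c$; the detour through $\lfloor x \rfloor \le mc + c - 1$ is harmless but superfluous.)
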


\begin{proof}
Using Euclidean division, write $x = qc + r$ for $q \in \Zee$ and
$0 \leq r < c$.   Since $0 \leq \lfloor r \rfloor < c$, we have
\begin{equation}
{{\lfloor x \rfloor}\over c} = {{qc + \lfloor r \rfloor} \over c} \\
= q + {{\lfloor r \rfloor } \over c} .
\end{equation}
Hence 
$\lfloor {{\lfloor x \rfloor} \over c} \rfloor = q$.
On the other hand $x/c = q + r/c$, so
$\lfloor x/c \rfloor = q$.
\end{proof}

We say that a continued fraction is {\it purely periodic\/} of
{\it period length\/} $P$ if it is of the form
\begin{equation}
[a_1, a_2, \ldots, a_P, a_1, a_2, \ldots, a_P, a_1, a_2, \ldots, a_P,
\ldots]
\label{cfpp}
\end{equation}
for some $P \geq 1$.  

\begin{theorem}
Suppose $0 < \gamma < 1$ is a quadratic irrational
such that $1/\gamma$ has a purely periodic continued
fraction of the form given in Eq.~\eqref{cfpp}. 
Let $p_i/q_i$ denote the $i$'th convergent to the
continued fraction for $\gamma$.
Then for all integers $n\geq 1$ we have
$$[ (n-1)_\gamma\, 0^P ]_\gamma = q_{P} (n-1) + q_{P-1} \cdot
\lfloor n \gamma \rfloor  .$$
\label{thm4}
\end{theorem}

\begin{proof}
We use the $2\times 2$ matrix approach to continued fractions,
pioneered by Hurwitz \cite{Hurwitz&Kritikos:1986}, Frame \cite{Frame:1949}, and Kolden \cite{Kolden:1949}, and also discussed (for example)
in \cite{Allouche&Shallit:2003}.

This approach uses the fact, easily proved by induction, that
\begin{equation}
\left[ \begin{array}{cc} a_0 & 1 \\ 1 & 0  \end{array} \right]
\left[ \begin{array}{cc} a_1 & 1 \\ 1 & 0  \end{array} \right]
\cdots
\left[ \begin{array}{cc} a_i & 1 \\ 1 & 0  \end{array} \right] =
\left[ \begin{array}{cc} p_i & p_{i-1} \\ q_i & q_{i-1}  \end{array}\right] 
\label{hur}
\end{equation}
for all $i \geq 0$.

Since $a_i = a_{i+P}$ for all $i \geq 1$, we have
\begin{align*}
\left[ \begin{array}{cc} p_{i+P} & p_{i+P-1} \\ q_{i+P} & q_{i+P-1}  \end{array}\right] &=
\left[ \begin{array}{cc} a_0 & 1 \\ 1 & 0  \end{array} \right]
\left[ \begin{array}{cc} a_1 & 1 \\ 1 & 0  \end{array} \right]
\cdots
\left[ \begin{array}{cc} a_{i+P} & 1 \\ 1 & 0  \end{array} \right] \\
&=
\left[ \begin{array}{cc} a_0 & 1 \\ 1 & 0  \end{array} \right]
\left[ \begin{array}{cc} a_1 & 1 \\ 1 & 0  \end{array} \right]
\cdots
\left[ \begin{array}{cc} a_{P} & 1 \\ 1 & 0  \end{array} \right]  \quad
\left[ \begin{array}{cc} a_{P+1} & 1 \\ 1 & 0  \end{array} \right]
\cdots
\left[ \begin{array}{cc} a_{i+P} & 1 \\ 1 & 0  \end{array} \right] \\
&= \left[ \begin{array}{cc} a_0 & 1 \\ 1 & 0  \end{array} \right]
\left[ \begin{array}{cc} a_1 & 1 \\ 1 & 0  \end{array} \right]
\cdots
\left[ \begin{array}{cc} a_{P} & 1 \\ 1 & 0  \end{array} \right]  \quad
\left[ \begin{array}{cc} a_1 & 1 \\ 1 & 0  \end{array} \right]
\cdots
\left[ \begin{array}{cc} a_i & 1 \\ 1 & 0  \end{array} \right] \\
&= \left[ \begin{array}{cc} p_P & p_{P-1} \\ q_P & q_{P-1}  \end{array}\right] 
\left[ \begin{array}{cc} q_i & q_{i-1} \\ p_i & p_{i-1} \end{array}\right] ,
\end{align*}
where we have used the fact that $a_0 = 0$.

Let us now compare the entry in row $2$ and column $1$ of both sides.
We get
\begin{equation}
 q_{i+P} = q_P q_i + q_{P-1} p_i.
 \label{cf}
\end{equation}

Now suppose the Ostrowski $\gamma$-representation of
$n-1$ is $\sum_{0 \leq i \leq t} e_i q_i$.
Then from Eq.~\eqref{cf}, we get
\begin{align*}
[(n-1)_\gamma 0^P]_\gamma &= \sum_{0 \leq i \leq t} e_i q_{i+P} \\
&= \sum_{0 \leq i \leq t} e_i (q_P q_i + q_{P-1} p_i) \\
&= \left(q_P \sum_{0 \leq i \leq t} e_i q_i \right) + 
\left(q_{P-1} \sum_{0 \leq i \leq t} e_i p_i \right) \\
&=  q_{P} (n-1) + q_{P-1} \lfloor n \gamma \rfloor ,
\end{align*}
as desired.
Here, in the last step, we have used \cite[Corollary 9.1.14]{Allouche&Shallit:2003}.
\end{proof}

\begin{corollary}
Suppose $0 < \gamma < 1$ is a quadratic irrational
such that $1/\gamma$ has a purely periodic continued
fraction of the form given in Eq.~\eqref{cfpp}. 
Let $p_i/q_i$ denote the $i$'th convergent to the 
continued fraction for $\gamma$.
Then the sequence $(\lfloor n \gamma \rfloor)_{n \geq 1}$
is $\gamma$-Ostrowski synchronized.
\label{five}
\end{corollary}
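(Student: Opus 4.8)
The plan is to read the result straight off the identity in Theorem~\ref{thm4}, which already packages the essential content. Write $m$ for the period of the continued fraction, and let $L(x) = [\,(x)_\gamma\, 0^m\,]_\gamma$ denote the value obtained by appending $m$ trailing zeros to the Ostrowski representation of $x$. The first step is to verify that $L$ is itself a synchronized function of $x$. The key observation --- and the reason the period length $m$ appears --- is that appending $0^m$ to a canonical Ostrowski representation produces another \emph{canonical} representation: shifting the digit string left by $m$ places sends the digit formerly at position $i$ to position $i+m$, and since $a_{i+1} = a_{i+m+1}$ by purely periodicity of period $m$, every admissibility constraint (a)--(c) is preserved (the new low-order digits are $0$, so (c) holds trivially, and the top of the zero block respects (b) because $e_0 < a_1$). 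Consequently the relation $z = L(x)$ is checked by a finite automaton that reads $(x)_\gamma$ and $(z)_\gamma$ in parallel and confirms that the digits of $z$ are exactly those of $x$ offset by $m$ positions, with zeros filling the bottom; this is a purely local, finite-state check, so $L$ is $\gamma$-Ostrowski synchronized.

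Next I would rewrite Theorem~\ref{thm4} as the linear relation
\begin{equation*}
L(n-1) = q_m (n-1) + q_{m-1}\lfloor n\gamma\rfloor,
\end{equation*}
valid for all $n \geq 1$, and solve it for the quantity of interest. Since $q_{m-1} \geq 1$, this determines $\lfloor n\gamma\rfloor$ uniquely. I would then express the graph of the sequence by the first-order formula
\begin{equation*}
y = \lfloor n\gamma\rfloor \iff \exists z\ \bigl(z = L(n-1)\bigr) \wedge \bigl(z = q_m (n-1) + q_{m-1} y\bigr).
\end{equation*}
Every atomic piece here is synchronized: the predecessor $n \mapsto n-1$ by Proposition~\ref{prop1}(a), the shift predicate $z = L(n-1)$ by the previous paragraph (composed with the predecessor), and the affine relation $z = q_m(n-1)+q_{m-1}y$ because multiplication by a fixed constant and the addition relation are synchronized. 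Applying the closure of synchronized relations under first-order combinations (Theorem~\ref{decide}) yields a single automaton recognizing $\{(n,y) : y = \lfloor n\gamma\rfloor\}$, which is exactly the assertion that $(\lfloor n\gamma\rfloor)_{n\geq 1}$ is $\gamma$-Ostrowski synchronized.

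I expect the only genuinely delicate point to be the claim that $L$ is synchronized, and more precisely the verification that $(x)_\gamma\,0^m$ is always the \emph{canonical} Ostrowski representation of its value --- this is where purely periodicity is essential and where a non-periodic $\gamma$ could fail, since then appending zeros might violate the digit bounds $e_i \leq a_{i+1}$. Everything downstream is routine bookkeeping with the standard closure properties. One small check to keep in mind is the boundary case $n=1$: there $n-1=0$, $L(0)=0$, and the identity reduces to $0 = q_{m-1}\lfloor\gamma\rfloor = 0$ since $0<\gamma<1$, so the formula behaves correctly at the left endpoint.
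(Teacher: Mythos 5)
Your proposal is correct and follows essentially the same route as the paper: the paper also builds a finite automaton (${\tt shift}_m$) recognizing the ``append $0^m$'' relation and then expresses $z=\lfloor n\gamma\rfloor$ by the first-order formula $\exists u,v\ (n=u+1)\wedge(v=L(u))\wedge(v=q_{m-1}z+q_m u)$, closing with Theorem~\ref{decide}. Your explicit verification that $(x)_\gamma 0^m$ remains a \emph{canonical} representation (using pure periodicity to preserve constraints (a)--(c)) is a point the paper leaves implicit, and is a worthwhile addition rather than a deviation.
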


\begin{proof}
It is easy to see that for each
$m \geq 1$ there is a finite automaton
${\tt shift}_P$ accepting $0^P y$ and $y 0^P$ in parallel. 
We sketch the details.   The states are $P$-tuples of
integers appearing in the Ostrowski representation of 
$\Enn$, and the initial state and unique final state is $0^P$.
Here the ``meaning'' of a state named $f_1\cdots f_P$ is
that these are the last $P$ symbols of $y$ that the automaton has read.
The transition function $\delta$ is
defined by $\delta(f_1 f_2 \cdots f_P, [f_1, g]) = f_2 \cdots f_P g$.

Since $q_P$ and $q_{P-1}$ are constants, the 
first-order formula
$$\exists u,v \ n=u+1 \andd \shift_P(u,v) \andd v=q_{P-1} \cdot z + q_{P}
\cdot u$$
expresses $z = \lfloor n \gamma \rfloor$ as a synchronized function of $n$, for $n \geq 1$.  The result now follows by Theorem~\ref{decide}.
\label{main2}
\end{proof}

\begin{remark} 
For the special case of $\alpha = (1+\sqrt{5})/2$, the golden ratio, this was
already proved in \cite{Mousavi&Schaeffer&Shallit:2016}.   
\end{remark}

\begin{remark}
In practice, the automaton for $\shift_P(u,v)$ can sometimes be more
easily constructed as follows.  We construct a single automaton
for $\shift_1(u,v)$, and then create the shifters for $P \geq 2$
by iterating the construction, using a sequence of first-order formulas:
$$\shift_{P+1}(u,v) := \exists x \ \shift_1(u,x) \AND \shift_P(x,v).$$
\end{remark}

We can now prove the two main results of the paper.

\begin{theorem}
Suppose $0 < \gamma < 1$ is a quadratic irrational
such that 
$$\gamma = [0, a_1, a_2, \ldots, a_P, a_1, a_2, \ldots, a_P, a_1, a_2, \ldots, a_P,
\ldots],$$
and suppose $\alpha, \beta \in \Que(\gamma)$
such that $\alpha \geq 0$ and $\alpha+\beta \geq 0$.
Then $(\lfloor n \alpha + \beta \rfloor)_{n \geq 1}$
is $\gamma$-Ostrowski synchronized.
\label{main1}
\end{theorem}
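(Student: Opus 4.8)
The plan is to reduce the inhomogeneous problem to the homogeneous synchronization already established in Corollary~\ref{five}, exploiting that $\alpha$ and $\beta$ are rational combinations of $1$ and $\gamma$. Since $\gamma$ is a quadratic irrational, $\{1,\gamma\}$ is a $\Que$-basis of $\Que(\gamma)$, so we may write $\alpha = (A+B\gamma)/D$ and $\beta = (C+E\gamma)/D$ for integers $A,B,C,E$ and a common denominator $D \geq 1$. Then
$$ n\alpha + \beta = \frac{(An+C) + (Bn+E)\gamma}{D},$$
and the hypotheses $\alpha \geq 0$ and $\alpha+\beta \geq 0$ guarantee $n\alpha+\beta \geq 0$ for every $n \geq 1$, so the sequence takes values in $\Enn$ and synchronization is meaningful.

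First I would strip off the division by $D$. Since $An+C \in \Zee$, the preceding Lemma gives
$$ \lfloor n\alpha + \beta\rfloor = \left\lfloor \frac{(An+C) + \lfloor (Bn+E)\gamma\rfloor}{D}\right\rfloor,$$
so it suffices to show that the integer $P(n) := (An+C) + \lfloor (Bn+E)\gamma\rfloor$ is a synchronized function of $n$. Note that $P(n) = \lfloor D(n\alpha+\beta)\rfloor \geq 0$, so $P$ genuinely takes values in $\Enn$ even though its two summands need not be individually nonnegative. Once $P$ is synchronized, Theorem~\ref{decide} expresses $z = \lfloor P(n)/D\rfloor$ by the first-order condition $Dz \leq P(n) \leq Dz + D - 1$, using only multiplication by the constant $D$, and we are done.

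The heart of the argument is therefore to realize $w(n) := \lfloor (Bn+E)\gamma\rfloor$ synchronously. By Corollary~\ref{five} the map $m \mapsto \lfloor m\gamma\rfloor$ is synchronized for $m \geq 1$; call its graph $g$. I would then split on the sign of $m = Bn+E$: when $m \geq 1$ we have $w = g(m)$; when $m = 0$ we have $w = 0$; and when $m \leq -1$, writing $k = -m \geq 1$ and using the reflection identity $\lfloor -k\gamma\rfloor = -\lfloor k\gamma\rfloor - 1$ (valid since $k\gamma \notin \Zee$) gives $w = -g(k) - 1$. Each branch is a first-order condition relating $n$ and $w$ built from multiplication by the constant $B$, the constant $E$, and the synchronized relation $g$, so by Theorem~\ref{decide} the relation defining $w(n)$, and hence $P(n) = (An+C) + w(n)$, is synchronized.

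I expect the main obstacle to be the sign bookkeeping rather than any deep idea: because the decision framework operates over $\Enn$ with Ostrowski $\gamma$-representations, one must feed only nonnegative arguments to $g$ and ensure that every quantity named in the defining formula is nonnegative. This is exactly where the hypotheses $\alpha \geq 0$ and $\alpha+\beta \geq 0$ (forcing $P(n) \geq 0$) and the reflection identity are needed; the regime $Bn+E \leq 0$, which occurs for instance when $\alpha = 1-\gamma > 0$ has $B < 0$, is the case that Corollary~\ref{five} cannot handle directly.
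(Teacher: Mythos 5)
Your proposal follows essentially the same route as the paper: write $\alpha$ and $\beta$ over a common denominator in the basis $\{1,\gamma\}$, use the floor-division lemma to pull the denominator outside, and reduce everything to the synchronization of $m \mapsto \lfloor m\gamma\rfloor$ from Corollary~\ref{five} via the closure properties of Proposition~\ref{prop1} and Theorem~\ref{decide}. The one substantive difference is that you are more careful than the paper on exactly the point you flag at the end. The paper's proof simply asserts that one may take the coefficient of $\gamma$ in $\alpha$ (its $b$, your $B$) to be nonnegative; since $\{1,\gamma\}$ is a $\Que$-basis, that coefficient is determined by $\alpha$ up to positive scaling, so for $\alpha = 1-\gamma$ (which is positive because $0<\gamma<1$) it is genuinely negative and cannot be normalized away. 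The paper also feeds $bn+e$ into the synchronized map $\lfloor\,\cdot\,\gamma\rfloor$ without addressing the finitely many $n$ (or, when $b<0$, all large $n$) for which $bn+e<0$. Your branch on the sign of $Bn+E$, together with the reflection identity $\lfloor -k\gamma\rfloor = -\lfloor k\gamma\rfloor - 1$ for $k\geq 1$ and the observation that $P(n) = \lfloor D(n\alpha+\beta)\rfloor \geq 0$ even when its two summands are not individually nonnegative, closes this gap cleanly; the resulting relation is still first-order over $\Enn$ after moving negatively-signed terms to the other side of each equation. So your argument is correct, is the paper's argument in spirit, and is in fact a slightly more complete version of it.
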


\begin{proof}
If $\alpha, \beta \in \Que(\gamma)$ satisfy the stated
inequalities, then we can write
$\alpha= (a+ b\gamma)/c$,
$\beta = (d + e\gamma)/c$
for some integers $a, b, c, d, e$ with $b\geq 0$ and $c \geq 1$.
So 
\begin{align*}
\lfloor \alpha n + \beta\rfloor &= \lfloor n (a+b\gamma)/c + (d + e\gamma)/c \rfloor  \\
&= \left\lfloor {{\lfloor bn\gamma  + an + e\gamma + d \rfloor} \over c } \right\rfloor \\
&= \left\lfloor {{ \lfloor (bn+e)\gamma \rfloor  + an+d } \over c} \right\rfloor.
\end{align*}
Now from Corollary~\ref{five}, we know that
$f(n) = \lfloor n \gamma \rfloor$ is $\gamma$-Ostrowski
synchronized.   Hence, by Proposition~\ref{prop1}, so is $f(bn+e)$.   Hence so is
$f(bn+e) + an+d$.   Hence so is
$\lfloor {{f(bn+e)+ an+d}\over c} \rfloor$.   The result is proved.
\end{proof}

\begin{remark} In Section~\ref{fractional} we prove the
converse to Theorem~\ref{main1}.
\end{remark}

\begin{corollary}
Let $0 < \gamma < 1$ be a quadratic irrational such that $1/\gamma$ has a purely
periodic continued fraction, and let $\alpha, \beta \in \Que(\gamma)$.    The first-order theory of the natural numbers with addition and the
function $n \rightarrow \lfloor n\alpha+ \beta \rfloor$ is decidable.
\label{main1a}
\end{corollary}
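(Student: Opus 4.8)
The plan is to exhibit $(\Enn, +, \, n \mapsto \lfloor n\alpha + \beta\rfloor)$ as a structure all of whose atomic relations are synchronized in the Ostrowski $\gamma$-numeration system, and then to invoke the standard automaton-theoretic decision procedure. Recall the mechanism underlying Theorem~\ref{decide} (which rests on \cite{Bruyere&Hansel&Michaux&Villemaire:1994}): a first-order sentence over such a structure can be decided provided every atomic relation is recognized by a finite automaton reading the relevant tuples in parallel, because the class of recognizable relations is effectively closed under the Boolean connectives and under projection (existential quantification), and the emptiness problem for finite automata is decidable. Thus it suffices to verify that equality, the addition graph $x + y = z$, and the graph $y = \lfloor n\alpha + \beta\rfloor$ are each synchronized in terms of Ostrowski $\gamma$-representations.

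First I would normalize $\gamma$ so as to land in the hypotheses of Theorem~\ref{main1}. A purely periodic $\gamma$ has a continued fraction whose tail is periodic, and replacing $\gamma$ by its fractional part $\{\gamma\} = \gamma - \lfloor\gamma\rfloor$ changes neither the quadratic field $\Que(\gamma) = \Que(\{\gamma\})$ nor the convergent denominators $q_i$, since these do not depend on $a_0$; hence it leaves the Ostrowski numeration system unchanged. After this replacement we may assume $0 < \gamma < 1$ with a continued fraction of the form $[0, a_1, \ldots, a_m, a_1, \ldots, a_m, \ldots]$, exactly the setting of Theorem~\ref{main1}. Moreover, for $n \mapsto \lfloor n\alpha + \beta\rfloor$ to be $\Enn$-valued on all $n \geq 1$ one needs $\alpha \geq 0$ and $\alpha + \beta \geq 0$, and these are precisely the inequalities assumed in Theorem~\ref{main1}; under them the Beatty graph is synchronized.

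With the normalization in hand, the three atomic relations are all available as automata over a common alphabet: equality is trivial, the addition relation is synchronized by the theorem of Baranwal, Schaeffer, and Shallit \cite{Baranwal&Schaeffer&Shallit:2021}, and $y = \lfloor n\alpha + \beta\rfloor$ is synchronized by Theorem~\ref{main1}. Feeding any first-order formula built from these primitives to the procedure of Theorem~\ref{decide} produces, effectively, a finite automaton for the relation it defines; applied to a closed sentence this yields an automaton whose language is either empty or all of the input space, and testing emptiness decides the sentence. This establishes decidability.

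I expect the main obstacle to be not the mathematical core, which is an assembly of results already proved, but the interpretive bookkeeping. In particular I would need to confirm carefully that the passage from $\gamma$ to $\{\gamma\}$ genuinely preserves the numeration system, and to settle how the statement is to be read when $\alpha$ or $\alpha + \beta$ is negative, since then $\lfloor n\alpha + \beta\rfloor$ eventually leaves $\Enn$ and the function must be handled either as a partial function or by a fixed sign convention. Beyond these points, no new estimates or computations are required.
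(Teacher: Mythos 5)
Your proposal is correct and follows essentially the same route as the paper, whose entire proof is the single sentence ``We combine the result of Theorem~\ref{main1} with Theorem~\ref{decide}''; you simply unpack what that combination involves (synchronization of equality, addition, and the Beatty graph, plus closure of recognizable relations under Boolean operations and projection). Your closing caveat about the sign conditions $\alpha \geq 0$ and $\alpha + \beta \geq 0$ identifies a genuine mismatch between the corollary's hypotheses and those of Theorem~\ref{main1} that the paper's one-line proof also leaves unaddressed.
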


\begin{proof}
We combine the result of Theorem~\ref{main1} with Theorem~\ref{decide}.
\end{proof}

\section{Beatty sequences in {\tt Walnut}}

As mentioned previously, {\tt Walnut} is free software that implements a
decision procedure for automatic sequences.  It can work with both
DFAO's and ordinary finite automata.  Most queries in {\tt Walnut} are 
simply first-order logical statements, with some minor concessions
to allowing these statements to be formulated in plain ASCII.  For example,
the universal quantifier $\forall$ is written {\tt A}, and the
existential quantifier $\exists$ is written {\tt E}.  Also,
logical AND is written ${\tt \&}$, logical OR is written {\tt |},
negation is written ${\tt\char'176}$, implication is written {\tt =>},
and logical IFF is written {\tt <=>}.

The {\tt ost} command creates an Ostrowski numeration system, the {\tt def} command defines an automaton for later use, the {\tt reg} command allows defining a regular expression, and the {\tt eval} command evaluates
a formula with no free variables and reports {\tt TRUE} or
{\tt FALSE}.   In {\tt Walnut}, we can multiply and do integer
division by natural number constants, but not variables.  When an automaton
is defined using {\tt def}, the user supplies a name for it.  The automaton
can then be used later by invoking its name prefixed with a dollar sign.

If a first-order statement has no free variables, then {\tt Walnut} returns
either {\tt TRUE} or {\tt FALSE}, and its computations provide a proof of
the result.  Otherwise it returns an automaton accepting the values of the
free variables that make the logical statement evaluate to {\tt TRUE}.

The {\tt combine} command allows the user to combine different automata
into a single DFAO with specified outputs.

We now explain how to use Beatty sequences of quadratic irrationals in 
{\tt Walnut}.  Suppose $\alpha$ and $\beta$ lie in a quadratic field $\Que(\gamma)$, where $1/\gamma$ 
has a purely periodic continued fraction.

If the period of $\gamma$'s continued fraction starts with
$1$, rotate the period to get $\gamma'$ with a period that begins with a number larger than $1$.   Then $\gamma'$ generates the same
field as $\gamma$.  This is necessary because {\tt Walnut} assumes
that the continued fraction represents a number in the open
interval $(0, {1\over 2})$.
Rotation of the period only
fails when $\gamma = [0,1,1,1,\ldots]$, but in that
case we can just use the Fibonacci numeration system instead, based
on the real number $[0,2,1,1,1,\ldots]$.

Next, create the appropriate number system with a command
of the form 
\begin{verbatim}
ost <name> [0] [period]:
\end{verbatim}
This will create a number system called
{\tt msd\_name}, where you replace {\tt name} with any
name you like.  For example
\begin{verbatim}
ost sqrt7 [0] [4 1 1 1]:
\end{verbatim}

Use Theorem~\ref{thm4} to write a {\tt Walnut} formula
for a synchronized automaton accepting $n$ and
$\lfloor n \gamma \rfloor$ in parallel.
Finally, use the relationship between $\alpha, \beta, $
and $\gamma$, and Theorem~\ref{main1} to create a 
{\tt Walnut} formula for 
$\lfloor n  \alpha + \beta \rfloor$.

As an example, let us obtain the synchronized automaton
corresponding to the Beatty sequence
where $\alpha = {{\sqrt{21}-1} \over 2}$,
$\beta = {{\sqrt{21}+3} \over 4}$.
Since $\alpha = [1,1,3,1,3,\ldots]$, the periodic portion of the continued fraction 
is $1,3$, which we then rotate to get $[3,1]$.
Define $\gamma = [0,3,1,3,1,3,1,\ldots]$; then
$\gamma = (\sqrt{21}-3)/6$.   In the corresponding 
Ostrowski numeration system, the digits can be $0,1,2,$ or
$3$.   We now build the shifting automaton as described
in the proof of Corollary~\ref{five}, and it is depicted in Figure~\ref{shiftaut}.   We give it the name {\tt shift13}
in {\tt Walnut}.
\begin{figure}[htb]
\begin{center}
\includegraphics[width=6.5in]{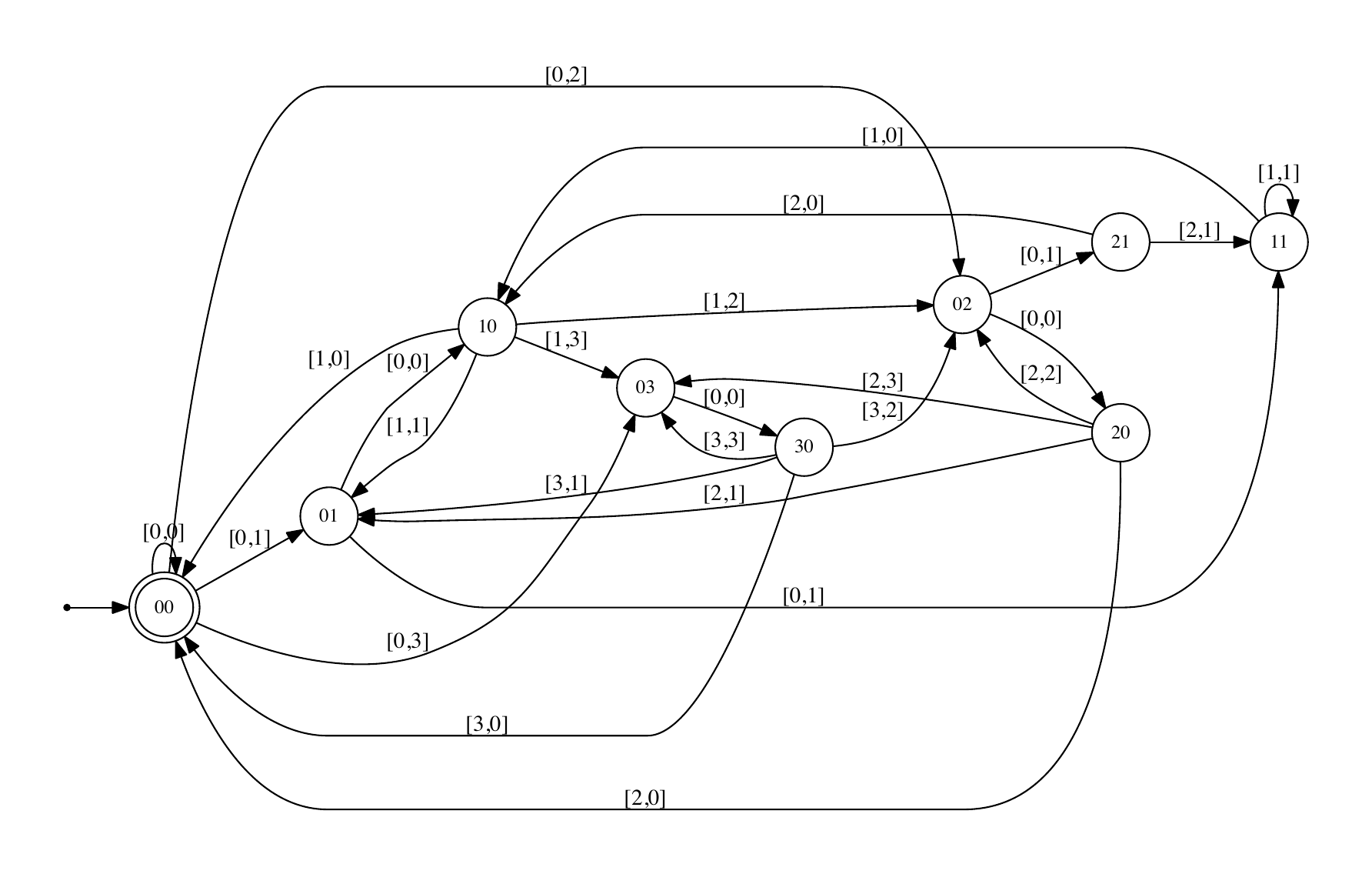}
\end{center}
\caption{Shift automaton.}
\label{shiftaut}
\end{figure}
States correspond to two consecutive Ostrowski digits that are permitted
by the inequalities given in Section~\ref{ostrowski}.

Next, we obtain the synchronized automaton for
$(\lfloor n \gamma \rfloor)_{n \geq 1}$.  In Theorem~\ref{thm4}, we have $q_P = 4$ and $q_{P-1} = 3$.
We use the following {\tt Walnut} commands:
\begin{verbatim}
ost s13 [0] [3 1]:
# define the numeration system
def beattyg "?msd_s13 (n=0 & z=0) | (Eu,v n=u+1 & $shift13(u,v) & v=3*z+4*u)":
\end{verbatim}
This automaton has 32 states.  Notice that we have to handle $n=0$ as a special case.

Now it is easy to see that $\alpha = 1+3 \gamma$
and $\beta = (3 + 3\gamma)/2$.   Thus in Theorem~\ref{main1}
we take $a = 2$, $b = 6$, $c = 2$, $d = 3$, and $e = 3$,
and enter the following {\tt Walnut} command:
\begin{verbatim}
def beatty "?msd_s13 Eu $beattyg(6*n+3,u) & z=(u+2*n+3)/2":
\end{verbatim}
This creates a synchronized automaton for 
$(\lfloor n \alpha+\beta  \rfloor)_{n \geq 1}$,
with 59 states.

We now claim that every integer $\geq 12$ is the sum
of two elements of $(\lfloor n \alpha+\beta  \rfloor)_{n \geq 1}$.   We can verify this as follows:
\begin{verbatim}
eval check2 "?msd_s13 An (n>=12) => Eu,v,n1,n2 u>=1 & 
   v>=1 & $beatty(u,n1) & $beatty(v,n2) & n=n1+n2":
\end{verbatim}
and {\tt Walnut} returns {\tt TRUE}.   Note the need to
specify that $u,v \geq 1$.

There are obviously some limits to our approach.  For example, as the
length of the repeating portion of the continued fraction gets longer,
the size of the automata grow rapidly, so that working with
periods of length (say) longer than $10$ might require computational
resources beyond one's computer.  

Another limitation involves theorems about Beatty sequences concerning
two numbers that are not in the same quadratic field.  There is no
obvious way to do this using \texttt{Walnut}.

\section{The additive theory of Beatty sequences}
\label{additive}

Given a sequence $(s_i)_{n \geq 1}$ or set $S = \{s_1, s_2,
\ldots \}$ of natural numbers,
a fundamental question of additive number theory is to decide 
which integers have representations as $r$-fold sums of the $s_i$
\cite{Nathanson:1996,Nathanson:2000}.  
More precisely, we define the set of $r$-fold
sums $A_s(r)$ as follows:
$$A_s(r) = \{ n \in \Enn \suchthat \exists i_1, i_2, \ldots, i_r
 \text{ such that } n = s_{i_1} + s_{i_2} + \cdots + s_{i_r} \}.$$
We say that $(s_i)_{n \geq 1}$ forms an {\it additive basis of
order $r$} if $A_s(r) = \Enn$.  We say that $(s_i)_{n \geq 1}$ forms
an {\it asymptotic additive basis of order $r$}
if $\Enn \setminus A_s(r)$ is finite.

In this section we generalize some recent results on the additive theory of Beatty sequences \cite{Kawsumarng:2021,Phunphayap&Pongsriiam&Shallit:2022,Shallit:2022,Dekking:2022}.

\begin{theorem}
Let $\gamma$ be a real quadratic irrational, and suppose
$\alpha, \beta \in \Que(\gamma)$.  Then
the inhomogeneous Beatty sequence $(\lfloor \alpha n + \beta\rfloor)_{n \geq 0}$
forms an asymptotic additive basis of order $h$ for some
integer $h \geq 1$.  Furthermore, the smallest such $h$ is
effectively computable, and one can also decide if
$(\lfloor \alpha n + \beta\rfloor)_{n \geq 0}$ forms an additive basis.
\end{theorem}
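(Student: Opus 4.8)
The plan is to reduce every assertion to the decidability of first-order statements over $\Enn$ with addition and the synchronized function $s(n) = \lfloor \alpha n + \beta \rfloor$, supplying separately the one genuinely analytic ingredient, namely the mere existence of a finite order. We take $\alpha$ irrational, as the term ``Beatty sequence'' presupposes (irrationality is essential here: for an integer $\alpha \geq 2$ the sequence is confined to a single residue class and is an asymptotic basis of no order), together with $\alpha > 0$ and $\beta \geq 0$, so that the sequence is $\Enn$-valued and the hypotheses $\alpha \geq 0$, $\alpha + \beta \geq 0$ of Theorem~\ref{main1} hold. First I would replace $\gamma$ by a purely periodic quadratic irrational of $(0,1)$ generating the same field $\Que(\gamma)$ --- always possible, and the maneuver already used in Section~\ref{main} --- so that by Theorem~\ref{main1} the map $s$ is $\gamma$-Ostrowski synchronized.

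Next I would set up the decision engine for a \emph{fixed} order $r$. Since $s$ is synchronized, the predicate $m \in A_s(r)$ is expressed by the first-order formula
$$\exists\, n_1,\ldots,n_r,\ y_1,\ldots,y_r \ \Big( \bigwedge_{1 \leq i \leq r} y_i = s(n_i) \Big) \andd m = y_1 + \cdots + y_r ,$$
which uses only addition and the synchronized function $s$; by Theorem~\ref{decide} this yields an automaton recognizing the Ostrowski representations of the members of $A_s(r)$, so $A_s(r)$ is a regular (Ostrowski-automatic) set. Consequently both questions are decidable for each fixed $r$: ``$A_s(r) = \Enn$'' (additive basis of order $r$) is universality of a regular language, while ``$\Enn \setminus A_s(r)$ is finite'' (asymptotic basis of order $r$) is finiteness of the complement language; equivalently, each is the truth value of a first-order sentence, decidable by Theorem~\ref{main1} together with Theorem~\ref{decide}.

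The crux --- and the step I expect to be the main obstacle --- is to show that a finite order exists at all, for otherwise the search over $r = 1, 2, 3, \ldots$ need not halt. Here I would argue combinatorially. The consecutive differences $s_{n+1} - s_n$ take exactly the two \emph{coprime} values $k = \lfloor \alpha \rfloor$ and $k+1$, both occurring infinitely often (when $0 < \alpha < 1$ the differences are $0$ and $1$, so $S$ is cofinite and $h = 1$). Fixing one occurrence of each gap value, repetition in the sum lets me build inside $A_s(r)$ an arithmetic progression of common difference $k$ and length $r+1$, and likewise one of common difference $k+1$; their sumset, which lies in $A_s(2r)$, then contains a block of \emph{consecutive} integers whose length grows with $r$, by the numerical-semigroup (Chicken McNugget) property of the coprime pair $\{k,k+1\}$. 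Once $A_s(2r)$ contains a run of at least $k+1$ consecutive integers, adding one further copy of $S$ (whose gaps are at most $k+1$) fills in every sufficiently large integer, so $S$ is an asymptotic basis of order $2r+1$. This furnishes an explicit finite $h$, guaranteeing that the iteration ``test asymptotic-basis-of-order-$r$ for $r = 1, 2, \ldots$'' terminates and returns the smallest such $h$.

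Finally I would decide the exact-basis question. Because every summand is nonnegative, $0 \in A_s(r)$ if and only if $0 \in S$, i.e.\ $\lfloor \beta \rfloor = 0$; if $0 \notin S$ the answer is simply \emph{no}, as no $A_s(r)$ can contain $0$. If $0 \in S$ then the sets are nested, $A_s(r) \subseteq A_s(r+1)$ (pad a representation with the summand $0$), and moreover $A_s(r+1) = A_s(r) + S$. Taking the asymptotic threshold $N$ with $[N,\infty) \subseteq A_s(h)$, I would compute the increasing finite sets $A_s(r) \cap [0,N)$ (membership being decidable by the automaton for $A_s(r)$) for $r = h, h+1, \ldots$ until a value of $r$ produces no new element below $N$; the recursion $A_s(r+1) = A_s(r) + S$ shows that a single such repetition forces permanent stabilization, so this halts, and $S$ is an additive basis if and only if the stabilized automaton for $A_s(r^{\ast})$ accepts all of $\Enn$, which is decidable. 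The only non-mechanical input in the whole argument is the existence of the finite order $h$; everything else is a direct appeal to the synchronization of $s$ and the decidability machinery of Theorem~\ref{decide}.
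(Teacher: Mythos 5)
Your proposal is correct, and its overall skeleton matches the paper's: synchronize $n \mapsto \lfloor \alpha n + \beta \rfloor$ via Theorem~\ref{main1}, express ``(asymptotic) additive basis of order $r$'' as a first-order sentence decidable by Theorem~\ref{decide}, and search over $r=1,2,\ldots$, with termination guaranteed by an a priori finiteness result. Where you genuinely diverge is in how that finiteness result is obtained. The paper simply cites Nathanson's classical theorems (Theorems 11.6 and 11.7 of \emph{Elementary Methods in Number Theory}), which assert that every Beatty sequence is an asymptotic additive basis of effectively bounded order; you instead prove this from scratch using the two-gap structure of the sequence: the consecutive differences take only the coprime values $k=\lfloor\alpha\rfloor$ and $k+1$, repeated summands give long arithmetic progressions of each common difference inside $A_s(r)$, their sumset contains a run of $k+1$ consecutive integers by the coprimality of $k$ and $k+1$, and one further copy of $S$ then covers a tail of $\Enn$, giving the explicit order $2r+1$. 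Your route buys a self-contained elementary argument with an explicit bound, at the cost of length; the citation buys brevity. You are also more careful than the paper on two points it glosses over: (i) the hypotheses really do require $\alpha$ irrational and positive (for rational $\alpha \geq 2$ the statement is false as literally written), and (ii) the unqualified question ``is $S$ an additive basis of \emph{some} order'' is not a single first-order sentence, so your stabilization argument --- $A_s(r+1)=A_s(r)+S$ together with the asymptotic threshold $N$ forces the finite sets $A_s(r)\cap[0,N)$ to stabilize permanently after one repetition --- supplies a termination proof for that part, where the paper only says one can ``write a first-order statement that every natural number is a sum of some number of terms.''
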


\begin{proof}
The first claim follows immediately from a classical theorem 
(see Nathanson \cite[p.~366, Theorems 11.6 and 11.7]{Nathanson:2000}).  
Furthermore, the proof of this classical result given in the cited
reference shows that every Beatty sequence
forms an asymptotic additive basis where an {\it upper bound\/} on
the number of terms
needed to represent every sufficiently large number is (effectively) computable.
Now we just write first-order statements
that every sufficiently large integer is a sum of $h$ terms of the
Beatty sequence, and evaluate them using the algorithm of Theorem~\ref{decide}, until we find the smallest $h$ that works.  The algorithm
is then guaranteed to terminate by Nathanson's theorem.

Similarly we can write a first-order statement
that every natural number (not just every sufficiently large
natural number) is a sum of some number of terms.
\end{proof}

\begin{remark}
The same result applies to the generalized Beatty sequences of Allouche and Dekking
\cite{Allouche&Dekking:2019}.
\end{remark}

We now illustrate the ideas for a particular example.
We use the free software \texttt{Walnut}, which can prove theorems
about Ostrowski-automatic sequences, if they are stated in first-order
logic.   For more details about \texttt{Walnut}, see
\cite{Mousavi:2016,Shallit:2023}.

Let $\varphi = (1+\sqrt{5})/2$, the golden ratio, and consider the sequence
${\bf c} = (\lfloor n\varphi + {1 \over 2}\rfloor)_{n \geq 0}$.   This is sequence 
\seqnum{A007067} in the OEIS \cite{Sloane:2026}, and was originally studied by Johann Bernoulli \cite[pp.~66--69]{Venkov:1970}.  For convenience, we used
the Fibonacci numeration system, which is a feature of {\tt Walnut}.

\begin{theorem}
The sequence $\bf c$ is Fibonacci synchronized.
\label{varphisynch}
\end{theorem}

\begin{proof}
Starting from the known automaton (called {\tt phin}) for the Beatty sequence
$(\lfloor n \varphi \rfloor)_{n \geq 0}$
(see, e.g., \cite[\S 10.11]{Shallit:2023}), we can get
an automaton for $\bf c$ as follows (in {\tt Walnut})
\begin{verbatim}
reg shift {0,1} {0,1} "([0,0]|[0,1][1,1]*[1,0])*":
def phin "?msd_fib (s=0&n=0) | Ex $shift(n-1,x) & s=x+1":
def eta "?msd_fib Er $phin(2*n,r) & z=(r+1)/2":
\end{verbatim}
This produces the synchronized automaton for $\bf c$ in Figure~\ref{fig1}.  
The {\tt reg} command creates an automaton for a regular expression
that implements the assertion that the second argument is the left-shift
of the first argument.  
\begin{figure}[H]
\begin{center}
\includegraphics[width=6.5in]{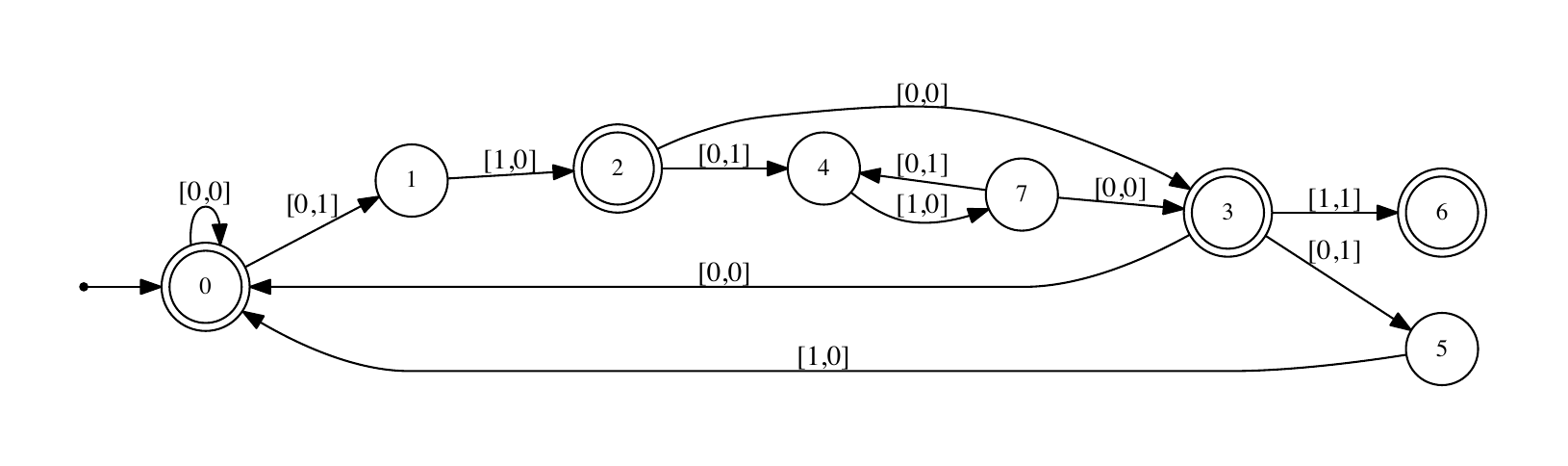}
\end{center}
\caption{Synchronized automaton for $(\lfloor n\varphi + {1\over 2} \rfloor)_{n \geq 0}$.}
\label{fig1}
\end{figure}
\end{proof}

\begin{theorem}
Every non-negative integer $n\not=1$ can be written as the sum of two elements
of $\bf c$.
\end{theorem}

\begin{proof}
We use the following {\tt Walnut} commands:
\begin{verbatim}
def iseta "?msd_fib En $eta(n,s)":
eval test "?msd_fib An (n!=1) <=> Ei, j $iseta(i) & $iseta(j) & n=i+j":
\end{verbatim}
and this returns {\tt TRUE}.   The result is proved.
\end{proof}

Using the same ideas we can prove the following.  Call two sets of
integers $S, T$ {\it asymptotically additively equivalent\/} if
there exists an integer $M$ such that every integer $n \geq M$
can be written as the sum of two elements of $S$ if and only if
the same is true of $T$.
\begin{theorem}
Given $\alpha_1, \alpha_2, \beta_1, \beta_2$ real numbers in the same
quadratic field, we can decide if 
the two inhomogeneous Beatty sequences
$(\lfloor \alpha_1 n + \beta_1 \rfloor)_{n \geq 1}$ and
$(\lfloor \alpha_2 n + \beta_2 \rfloor)_{n \geq 1}$ are
asymptotically additively equivalent.
\end{theorem}

The (simple) proof  is left to the reader.

\section{Proofs of some conjectures of Reble}
\label{reble}

As further applications we prove some conjectures of Don Reble regarding
the OEIS sequences \seqnum{A189377}, 
\seqnum{A189378}, and \seqnum{A189379}.

These conjectures relate to the existence
of monochromatic arithmetic progressions
of length $3$ in the Fibonacci word $\bf f$, defined by
the fixed point of the morphism $0 \rightarrow 01$, $1 \rightarrow 0$, or, alternatively, as the Sturmian word
corresponding to $\alpha = (3-\sqrt{5})/2$.
More specifically, we are interested in those $r$ for which
\begin{itemize}
\item[(i)] there exists $i$ such that
${\bf f}[i] = {\bf f}[i+r] = {\bf f}[i+2r] = 0$
\item[(ii)] there exists $j$ such that
${\bf f}[j] = {\bf f}[j+r] = {\bf f}[j+2r] = 1$.
\end{itemize}

Reble's conjectures can be stated as follows.
\begin{theorem}
Let $r = 2$, $s = (\sqrt{5}-1)/2$, and $t = (1+\sqrt{5})/2$.
Define
\begin{align*}
a(n) &= n + \lfloor ns/r \rfloor + \lfloor nt/r \rfloor \\
b(n) &= n + \lfloor nr/s \rfloor + \lfloor nt/s \rfloor \\
c(n) &= n + \lfloor nr/t \rfloor + \lfloor ns/t \rfloor .
\end{align*}
Then 
\begin{itemize}
\item[(a)]
there exists $m$ such that $m = a(n)$ iff (i) holds
for $r= n+1$ but (ii) does not for $r = n+1$;
\item[(b)]
there exists $m$ such that $m = b(n)$ iff neither
(i) nor (ii) holds for $r = n+1$;
\item[(c)]
there exists $m$ such that $m = c(n)$ iff both
(i) and (ii) hold for $r = n+1$.
\end{itemize}
\end{theorem}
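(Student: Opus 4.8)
The plan is to carry out the entire verification inside the decision procedure, exactly as in the earlier examples of this paper. First I would observe that every slope occurring in $a,b,c$ lies in $\Que(\sqrt{5}) = \Que(\varphi)$ and is nonnegative: for $a$ they are $s/2$ and $t/2$; for $b$ they are $r/s = 2\varphi$ and $t/s = \varphi^2$; and for $c$ they are $r/t = 2s$ and $s/t = s^2 = 2-\varphi$. Each summand $\lfloor n\lambda\rfloor$ is therefore a homogeneous Beatty sequence to which Theorem~\ref{main1} applies (with $\beta = 0$), so each is $\varphi$-synchronized; by Proposition~\ref{prop1}(b) the termwise sums $a(n)$, $b(n)$, $c(n)$ (adding the linear term $n$) are $\varphi$-synchronized as well. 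From the resulting synchronized automata I obtain, by existentially quantifying the index, first-order predicates $\mathrm{isA}(n)$, $\mathrm{isB}(n)$, $\mathrm{isC}(n)$ asserting that $n$ lies in the image of $a$, of $b$, or of $c$.

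Next I would encode the two monochromatic-progression conditions. Since $\mathbf{f}$ (the Sturmian word for $(3-\sqrt{5})/2$) is $\varphi$-Ostrowski automatic and Fibonacci addition is synchronized, the predicate
$$P_0(r) := \exists i\ \big(\mathbf{f}[i]=0 \andd \mathbf{f}[i+r]=0 \andd \mathbf{f}[i+2r]=0\big)$$
expressing (i), and the analogous predicate $P_1(r)$ obtained by replacing $0$ with $1$ and expressing (ii), are first-order in the Fibonacci numeration system. With these in hand, parts (a)--(c) become the three sentences asserting, for every $n$,
$$\mathrm{isA}(n) \iff \big(P_0(n+1) \andd \neg P_1(n+1)\big),$$
together with $\mathrm{isB}(n) \iff (\neg P_0(n+1)\andd \neg P_1(n+1))$ and $\mathrm{isC}(n)\iff(P_0(n+1)\andd P_1(n+1))$. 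By Theorem~\ref{decide} each of these is decidable, and I would evaluate all three in {\tt Walnut}; a return value of {\tt TRUE} for each establishes Reble's conjectures.

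The routine part is the automaton construction; the part requiring care is the translation itself. For each of the six floor terms I must extract the correct integer parameters $a,b,c,d,e$ of the representation $\alpha=(a+b\gamma)/c$ in Theorem~\ref{main1} (in particular absorbing the denominator $2$ appearing in $a(n)$ into the parameter $c$), and I must get the index conventions exactly right: the shift $r = n+1$, the choice $n \geq 1$ versus $n \geq 0$, and any exceptional small-$n$ behavior where the three images do not yet partition $\Enn$. The main obstacle is thus not a mathematical difficulty but the faithful encoding of the statement --- confirming that $\mathrm{isA}$, $\mathrm{isB}$, $\mathrm{isC}$ genuinely capture membership in the three Beatty images and that the off-by-one in $r = n+1$ is applied consistently --- after which correctness follows mechanically from the decidability guaranteed by Theorem~\ref{decide}.
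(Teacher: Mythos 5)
Your proposal follows essentially the same route as the paper: the authors likewise build $\varphi$-synchronized automata for the six floor terms (via algebraic identities such as $s/r=(\varphi-1)/2$, $t/s=\varphi^2$, matching your slope computations), existentially project to get the image predicates for $a,b,c$, express conditions (i) and (ii) as first-order predicates over the Fibonacci word, and have {\tt Walnut} verify the three equivalences with the shift $r=n+1$. The only difference is cosmetic --- the paper writes the six synchronized definitions directly from {\tt phin} rather than citing Theorem~\ref{main1} and Proposition~\ref{prop1} --- so your plan is a faithful match for the published proof.
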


\begin{proof}
We implement the formulas above in \texttt{Walnut}.   Here {\tt F} is {\tt Walnut}'s abbreviation for the Fibonacci word, indexed starting at position $0$,
and {\tt phin} is the synchronized automaton computing
$\lfloor \varphi n \rfloor$ for $\varphi = (1+\sqrt{5})/2$, the golden ratio,
given in the proof of Theorem~\ref{varphisynch}.
\begin{verbatim}
def three0 "?msd_fib Ei (F[i]=@0) & (F[i+n]=@0) & (F[i+2*n]=@0)":
def three1 "?msd_fib Ei (F[i]=@1) & (F[i+n]=@1) & (F[i+2*n]=@1)":

def fibsr "?msd_fib Er $phin(n,r) & s=(r-n)/2":
def fibtr "?msd_fib Er $phin(n,r) & s=r/2":
def fibrs "?msd_fib $phin(2*n,s)":
def fibts "?msd_fib Er $phin(n,r) & s=r+n":
def fibrt "?msd_fib Er $phin(2*n,r) & s=r-2*n":
def fibst "?msd_fib Er $phin(n,r) & s=2*n-(r+1)":

def a189377 "?msd_fib En,x,y $fibsr(n,x) & $fibtr(n,y) & z=n+x+y":
def a189378 "?msd_fib En,x,y $fibrs(n,x) & $fibts(n,y) & z=n+x+y":
def a189379 "?msd_fib En,x,y $fibrt(n,x) & $fibst(n,y) & z=n+x+y":

def reble1 "?msd_fib $three0(n) & ~$three1(n)":
def reble2 "?msd_fib ~$three0(n) & ~$three1(n)":
def reble3 "?msd_fib $three0(n) & $three1(n)":

eval rebleconj1 "?msd_fib An (n>=1) => ($reble1(n+1) <=> $a189377(n))":
eval rebleconj2 "?msd_fib An (n>=1) => ($reble2(n+1) <=> $a189378(n))":
eval rebleconj3 "?msd_fib An (n>=1) => ($reble3(n+1) <=> $a189379(n))":
\end{verbatim}
When we run them, we get 
\texttt{TRUE}
as the result.   The entire computation takes less than a second.
\end{proof}

\section{Graham's question}
\label{graham}

For real numbers $\alpha > 0$ and $\beta$, define the set
$S(\alpha, \beta) = \{ \lfloor \alpha n + \beta \rfloor 
\suchthat n \geq 1 \}$.
Graham \cite{Graham:1973} studied the following question:
for which finite sets of pairs of real numbers
$\{ (\alpha_i, \beta_i) \suchthat 1 \leq i \leq r \}$
does every sufficiently large integer belong to exactly
one $S(\alpha_i, \beta_i)$?  Such a finite set of pairs is called
an {\it eventual covering family\/} or ECF.

Our results prove that a special case is decidable:
\begin{theorem}
Suppose that all the $\alpha_i$ and $\beta_i$, $1 \leq i \leq r$,
lie in the
same quadratic field.  Then Graham's problem is decidable
for $\{ (\alpha_i, \beta_i) \suchthat 1 \leq i \leq r \}$.
\end{theorem}

\begin{proof}
We can create a first-order logical formula
asserting that there exists an $M$ such that, for all
$n \geq M$, we have
\begin{itemize}
\item[(i)]
there exist $k$ and at least one $i$, $1 \leq i \leq r$, such that
$n = \lfloor \alpha_i k + \beta_i \rfloor$;
\item[(ii)]
there do not exist $i, j, k_i, k_j$, $1 \leq i < j \leq r$, such that
$n = \lfloor \alpha_i k_i + \beta_i \rfloor = \lfloor \alpha_j k_j +
\beta_j \rfloor$.
\end{itemize}
It now follows from our approach that both of these are
decidable.
\end{proof}

\begin{example}
We show that 
$\{ (2\alpha, 0), (2\alpha, \alpha), (\alpha^2, 0) \}$
is such a family, where $\alpha = (1+\sqrt{5})/2$.
\begin{verbatim}
eval case_i "?msd_fib An (n>=2) => Ek k>=1 & ($phin(2*k,n) | $phin(2*k+1,n) 
   | $phi2n(k,n))":
eval case_ii "?msd_fib ~En,j,k j>=1 & k>=1 & (($phin(2*j,n) & $phin(2*k+1,n)) 
   | ($phin(2*j+1,n) & $phi2n(k,n)) | ($phi2n(j,n) & $phin(2*k,n)))":
\end{verbatim}
Both of these return {\tt TRUE}.
\end{example}

\section{A sequence of Hildebrand, Li, Li, and Xie}
\label{hildebrand}

Hildebrand, Li, Li, and Xie \cite{Hildebrand&Li&Li&Xie:2019} studied a certain sequence
$\tilde{c}(n)$, defined as follows:  let $a(n) = \lfloor n\varphi^4\rfloor$, $b(n) = \lfloor n \varphi^3 \rfloor$,
$c(n) = \lfloor n \varphi \rfloor$, 
and $\tilde{c}(n)$ be the $n$'th positive integer
that is not contained in the union
$\{ a(i) \suchthat i \geq 1 \} \, \cup \, 
\{ b(i) \suchthat i \geq 1 \}$.
They showed that $0 \leq c(n) - \tilde{c}(n) \leq 2$.

We can reprove this and do a little more, using our techniques:
\begin{theorem}
The Fibonacci automaton displayed in Figure~\ref{fig3}, on input $n$ in the Fibonacci numeration system, computes
$c(n) - \tilde{c}(n)$.
\begin{figure}[H]
\begin{center}
\includegraphics[width=6.5in]{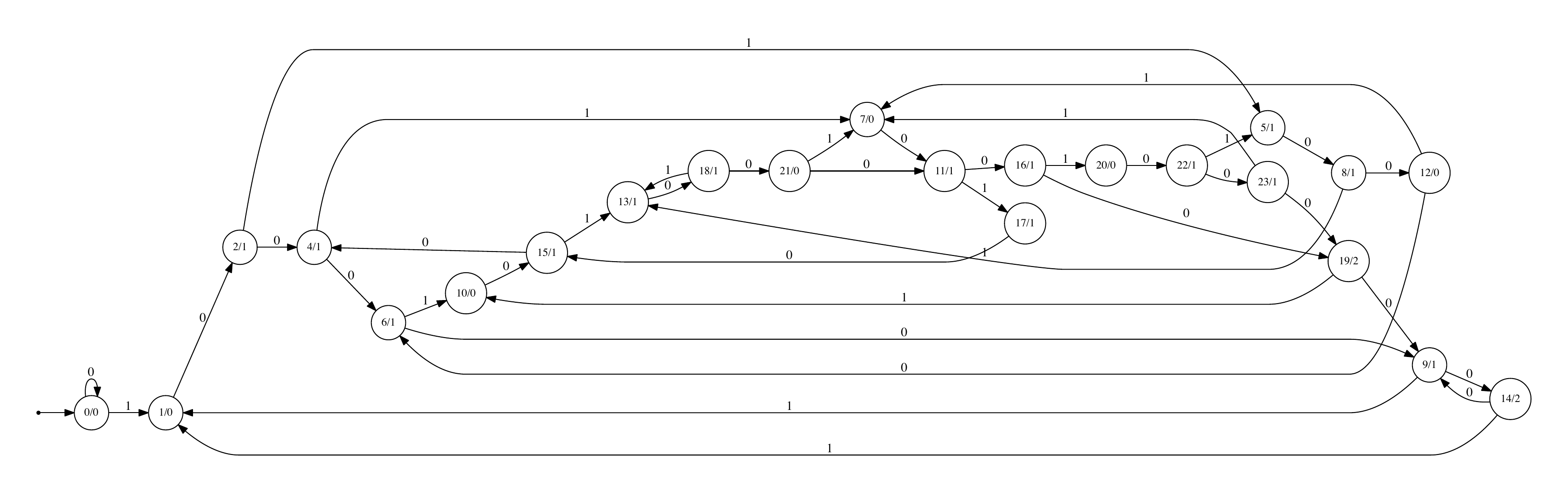}
\end{center}
\caption{Fibonacci automaton for $c(n) - \tilde{c}(n)$. A state labeled $q/a$ has output $a$.}
\label{fig3}
\end{figure}
\end{theorem}

\begin{proof}
We use the fact that $\varphi^4 = 3\varphi + 2$ and $\varphi^3 = 2\varphi + 1$.  We can therefore define synchronized automata for
$a(n)$, $b(n)$, and $c(n)$, as follows:
\begin{verbatim}
def a "?msd_fib Em $phin(3*n,m) & z=m+2*n":
def b "?msd_fib Em $phin(2*n,m) & z=m+n":
def c "?msd_fib $phin(n,z)":
\end{verbatim}

Now $\tilde{c}(n)$ is defined as the $n$'th positive integer not contained in the union $\{ a(i) \suchthat i \geq 1 \} \, \cup \, 
\{ b(i) \suchthat i \geq 1 \}$.
First, let us check that there is no intersection between the images
of $a$ and $b$:
\begin{verbatim}
eval no_inter "?msd_fib ~Em,n,x m>1 & n>1 & $a(m,x) & $b(n,x)":
\end{verbatim}
And {\tt Walnut} returns {\tt TRUE}.

Next, let's define inverses to $a(n)$ and $b(n)$ as follows:  
we choose $a^{-1} (m)$ such that it is the least value for
which $a(a^{-1} (m)) \leq m < a(a^{-1}(m) + 1)$, and the
analogous inequality for $b^{-1} (n)$:
\begin{verbatim}
def ai "?msd_fib  Et,u $a(z,t) & $a(z+1,u) & t<=m & m<u":
# ai(m) = z, t=a(ai(m)), u=a(ai(m)+1)
def ainv "?msd_fib $ai(m,z) & Ax (x<z & x>0) => ~$ai(m,x)":
# the least m with the property

def bi "?msd_fib  Et,u $b(z,t) & $b(z+1,u) & t<=m & m<u":
# bi(m) = z, t=b(bi(m)), u=b(bi(m)+1)
def binv "?msd_fib $bi(m,z) & Ax (x<z & x>0) => ~$bi(m,x)":
# the least m with the property
\end{verbatim}

Next, we can define ${\tilde{c}}(n)$ as the least $y$
such that $a^{-1} (y) + b^{-1}(y) + n = y$:
\begin{verbatim}
def ctw "?msd_fib Es,t $ainv(y,s) & $binv(y,t) & s+t+n=y":
def ctwid "?msd_fib $ctw(n,y) & Az (z<y) => ~$ctw(n,z)":
\end{verbatim}

Next we can check that $\tilde{c}(n)$ is indeed the complementary
sequence.
\begin{verbatim}
eval check4 "?msd_fib Ax (x>0) => (En (n>=1) & ($a(n,x)|$b(n,x))) <=> 
  (~En  (n>=1) & $ctwid(n,x))":
# ctwid is the complementary sequence of a(n) union b(n)
\end{verbatim}
And {\tt Walnut} returns {\tt TRUE}.

Next, we check that the inequality $0 \leq c(n) - \tilde{c}(n) \leq 2$ holds:
\begin{verbatim}
eval check5 "?msd_fib An,x,y ($phin(n,x) & $ctwid(n,y)) => (x=y|x=y+1|x=y+2)":
\end{verbatim}
and {\tt Walnut} returns {\tt TRUE}.

Finally, we combine the individual automata for the three
possible values of $c(n) - \tilde{c}(n)$ to get a single
automaton that computes the difference:
\begin{verbatim}
def diff0 "?msd_fib Ex $phin(n,x) & $ctwid(n,x)":
def diff1 "?msd_fib Ex $phin(n,x+1) & $ctwid(n,x)":
def diff2 "?msd_fib Ex $phin(n,x+2) & $ctwid(n,x)":
combine diff diff1=1 diff2=2:
\end{verbatim}
which gives us the automaton in Figure~\ref{fig3}. 
\end{proof}

\section{The Kimberling swappage problem}
\label{kimberling}

Let $L(n) = \lfloor n \varphi \rfloor$ be the lower
Wythoff sequence and $U(n) = \lfloor n \varphi^2 \rfloor$
be the upper Wythoff sequence.   Kimberling defined
the ``lower even swappage sequence'' $V_{\rm le}(n)$ as follows:  if $U(n)$ is 
even, then $V_{\rm le}(n) := U(n)$.   Otherwise, if $U(n)$ is odd,
then $V_{\rm le}(n) := L(i)$, where $i$ is the least index such that
$U(n-1) < L(i) < U(n+1)$ and
$L(i-1) < U(n) < L(i+1)$.  Here $(V_{\rm le}(n))$ is OEIS sequence
\seqnum{A141104}.  He conjectured
that the sequence $S(n):= V_{\rm le}(n)/2$ consists of those
positive integers not in the Beatty sequence
$(\lfloor n \varphi^3 \rfloor)_{n \geq 1}$, which was
later proved by Russo
and Schwiebert \cite{Russo&Schwiebert:2011}.

We can use {\tt Walnut} to provide an alternate proof
of Kimberling's conjecture.   The ideas are those similar to those of the previous section.   First, using the ideas
in \cite[\S 10.15]{Shallit:2023}, we guess a synchronized automaton {\tt leswap} for $V_{\rm le}(n)$ (it has $34$ states), and we then
verify it satisfies the definition.  Here {\tt phi2n} is a synchronized
automaton computing $\lfloor \varphi^2 n \rfloor$, taken from 
\cite[\S 10.11]{Shallit:2023}.
\begin{verbatim}
def chk1 "?msd_fib An Ex $leswap(n,x)":
def chk2 "?msd_fib An ~Ex1,x2 x1!=x2 & $leswap(n,x1) & $swap(n,x2)":

def odd "?msd_fib Ex n=2*x+1":
def even "?msd_fib Ex n=2*x":
def index "?msd_fib Aa,b,c,d,e,f ($phin(i-1,a) & $phin(i,b) & $phin(i+1,c) & 
   $phi2n(n-1,d) & $phi2n(n,e) & $phi2n(n+1,f)) => (a<e & e<c & d<b & b<f)":
def leastindex "?msd_fib $index(i,n) & Aj (1<=j & j<i) => ~$index(j,n)":
eval checkeven "?msd_fib An,x,y ($phi2n(n,x) & $even(x) & $leswap(n,y)) 
   => x=y":
eval checkodd "?msd_fib Ai,n,x,y,z (i>=1 & n>=1 & $phi2n(n,x) & $odd(x) & 
   $leswap(n,y) & $leastindex(i,n) & $phin(i,z)) => y=z":
\end{verbatim}
The two results of {\tt TRUE} from the last two commands verify that our guessed sequence
satisfies the definition.

Next, we verify the characterization in terms of the
Beatty sequence:
\begin{verbatim}
def phi3n "?msd_fib Ey $phin(2*n,y) & z=y+n":
eval kimber "?msd_fib Ax (x>0) => ((En (n>=1) & $phi3n(n,x)) 
   <=> (~En,y (n>=1) & $leswap(n,y) & 2*x=y))":
\end{verbatim}
and {\tt Walnut} returns {\tt TRUE}.

The same techniques can be used to prove the  conjectures stated  in the OEIS
about analogous sequences, namely
\begin{itemize}
    \item the ``upper even'' sequence $V_{\rm ue}(n)$ \seqnum{A141105};
    \item the ``lower odd'' sequence $V_{\rm lo}(n)$
    \seqnum{A141106}; and
    \item the ``upper odd'' sequence $V_{\rm uo}(n)$
    \seqnum{A141107}.
\end{itemize}
In particular, we can prove the following theorem with
{\tt Walnut}:
\begin{theorem}
    We have
    \begin{itemize}
        \item[(a)] $V_{\rm ue} (n) = 2 \lfloor {{\varphi+1}\over 2} n + {1 \over 2} \rfloor$;
         \item[(b)] $V_{\rm le} (n) = 2 \lfloor {{\varphi+1}\over 2} n  \rfloor$;
         \item[(c)] $V_{\rm uo} (n) = 2 \lfloor {{\varphi+1}\over 2} n  \rfloor + 1$;
         \item[(d)] $V_{\rm lo} (n) = 2 \lfloor {{\varphi+1}\over 2} n + {1 \over 2}  \rfloor- 1$.
    \end{itemize}
\end{theorem}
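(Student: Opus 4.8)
The plan is to prove all four identities purely mechanically in \texttt{Walnut}, exactly as in the preceding verification of Kimberling's swappage conjecture. The strategy has two halves: first obtain synchronized automata for the four ``swappage'' sequences $V_{\rm le}$, $V_{\rm ue}$, $V_{\rm lo}$, $V_{\rm uo}$ and for the four right-hand sides; then assert each identity as a first-order sentence quantified over all $n \geq 1$ and evaluate it with the algorithm of Theorem~\ref{decide}.

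For the left-hand sides I would proceed exactly as with \texttt{leswap}: for each sequence, guess a synchronized automaton (using the guessing method of \cite[\S 10.15]{Shallit:2023}) and then \emph{verify} mechanically that it agrees with Kimberling's definition, i.e.\ that it is single-valued and total and that it reproduces the even/odd case split together with the ``least index $i$'' condition, just as was done above for $V_{\rm le}$. Since $V_{\rm ue}$, $V_{\rm lo}$, $V_{\rm uo}$ are defined by the same recipe with ``lower''/``upper'' and ``even''/``odd'' interchanged, the corresponding validation formulas are obtained from those for $V_{\rm le}$ by the obvious substitutions.

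For the right-hand sides, note that $\frac{\varphi+1}{2} = \frac{\varphi^2}{2} = \frac{3+\sqrt{5}}{4}$ lies in $\Que(\varphi)$, so by Theorem~\ref{main1} the sequences $(\lfloor \frac{\varphi+1}{2} n \rfloor)$ and $(\lfloor \frac{\varphi+1}{2} n + \frac{1}{2} \rfloor)$ are $\varphi$-synchronized. Concretely, the identity $\lfloor x/c \rfloor = \lfloor \lfloor x \rfloor / c \rfloor$ gives $\lfloor \frac{\varphi+1}{2} n \rfloor = \lfloor \frac{\lfloor \varphi n \rfloor + n}{2} \rfloor$ and $\lfloor \frac{\varphi+1}{2} n + \frac{1}{2} \rfloor = \lfloor \frac{\lfloor \varphi n \rfloor + n + 1}{2} \rfloor$, both built directly from \texttt{phin} and integer division by $2$. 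With these automata in hand, each of (a)--(d) becomes a sentence of the form ``for all $n \geq 1$ and all $x$, the guessed automaton accepts $(n,x)$ iff $x$ equals the prescribed floor expression,'' which Theorem~\ref{decide} evaluates to a truth value.

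The only genuine difficulty is the first step: correctly guessing the synchronized automata for the four swappage sequences. Their definitions are indirect (an even/odd dichotomy, followed in the odd case by a minimization over an index $i$ subject to four nested Beatty inequalities), so the automata are not immediately apparent and must be discovered by the guessing heuristic and then validated. Once a candidate passes its validation formulas, every remaining step, including the four identity checks, is a decidable first-order query, so correctness of the theorem reduces to \texttt{Walnut} returning \texttt{TRUE}, with care taken only to restrict the range to $n \geq 1$ and to discard the dead state arising from non-canonical Fibonacci representations.
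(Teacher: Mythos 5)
Your proposal is correct and follows essentially the same route as the paper: the paper explicitly omits the details here, saying only that ``the same techniques'' as in the preceding \texttt{leswap} verification apply, and your plan --- guess and validate synchronized automata for the four swappage sequences, express the right-hand sides via $\lfloor\frac{\varphi+1}{2}n\rfloor = \lfloor\frac{\lfloor\varphi n\rfloor+n}{2}\rfloor$ and $\lfloor\frac{\varphi+1}{2}n+\frac12\rfloor = \lfloor\frac{\lfloor\varphi n\rfloor+n+1}{2}\rfloor$, and check each identity as a decidable first-order sentence --- is a faithful and correct elaboration of exactly that method.
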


We omit the details.

\section{Sums-complement sets}
\label{sec11}
Clark Kimberling has studied (see \seqnum{A276871} in the OEIS) what he calls ``sums-complement" sets for irrational numbers $\alpha$.  These are the positive integers
that cannot be written as the difference of
two elements of the Beatty sequence 
$\lfloor n\alpha  \rfloor$.   More precisely,
$$S_{\alpha} := \{ n \geq 1 \suchthat \neg\exists i,j \ 
n = \lfloor i \alpha  \rfloor - \lfloor j \alpha \rfloor \}.$$

A set $S$ is said to be $\gamma$-Ostrowski automatic if there is a finite automaton accepting the $\gamma$-Ostrowski representation of $n$ as input and accepting if and only if $n \in S$.   Then we have the following proposition.
\begin{theorem}
 Suppose $0 < \gamma < 1$ is a quadratic irrational
such that $1 \over\gamma$ has purely periodic continued
fraction
$$[a_1, a_2, \ldots, a_P, a_1, a_2, \ldots, a_P, a_1, a_2, \ldots, a_P,
\ldots],$$
and suppose $\alpha \in \Que(\gamma)$
such that $\alpha \geq 0$.   Then the sums-complement
set $S_{\alpha}$ is $\gamma$-Ostrowski automatic.
\end{theorem}

\begin{proof}
The claim that $n \in S_{\alpha}$ is a first-order statement in terms of the Beatty sequence for $\alpha$.
\end{proof}

As a consequence, we may use {\tt Walnut} to obtain
explicit Ostrowski automata for the quadratic
numbers tabulated in \seqnum{A276871}.

\begin{example}
    Let $\gamma = [0,\overline{4,1,1,1}] = (\sqrt{7}-2)/3$
and $\alpha = \sqrt{7} = 3\gamma + 2$.   Then there is an
$\gamma$-Ostrowski automaton of 6961 states that recognizes the sums-complement set $S_{\alpha}$, which is
sequence \seqnum{A276873} in the OEIS.  This automaton can be computed with the following {\tt Walnut} code:
\begin{verbatim}
ost sqrt7 [0] [4 1 1 1];
def beatty7 "?msd_sqrt7 Eu,v n=u+1 & $shift4(u,v) & v=9*z+14*u"::
# largest intermediate automaton has 1710130 states
# time is 85648 ms
# 65 states

def beat7 "?msd_sqrt7 Ex $beatty7(3*n,x) & z=x+2*n":
# 96 states, 164ms

def a276873 "?msd_sqrt7 ~Em,n,x,y m>=1 & n>=1 & $beat7(m,x) 
   & $beat7(n,y) & z+x=y"::
# largest intermediate automaton has 64815 states
# 6961 states, 14150 ms
\end{verbatim}
\end{example}

\section{Sequences related to \texorpdfstring{$\sqrt{2}$}{sqrt(2)}}
\label{sec12}

In this section we find Ostrowski automata
for seven sequences from the OEIS related to $\sqrt{2}$, and we re-prove a number of conjectures related to these sequences.
The first few terms are given below in
Table~\ref{sqrt2}.
\begin{table}[H]
    \centering
    \begin{tabular}{c|ccccccccccccccccccccccc}
     $n$ &   0& 1& 2& 3& 4& 5& 6& 7& 8& 9&10&11&12&13&14&15&16\\
     \hline
\seqnum{A097508}$(n)$ & 0& 0& 0& 1& 1& 2& 2& 2& 3& 3& 4& 4& 4& 5& 5& 6 & 6\\
 \seqnum{A001951}$(n)$ & 0& 1& 2& 4& 5& 7& 8& 9&11&12&14&15&16&18&19&21&22\\
 \seqnum{A003151}$(n)$ & 0& 2& 4& 7& 9&12&14&16&19&21&24&26&28&31&33&36&38\\
\seqnum{A276862}$(n)$ & 2& 2& 3& 2& 3& 2& 2& 3& 2& 3& 2& 2& 3& 2& 3& 2& 3\\
\seqnum{A097509}$(n)$ & 3& 2& 3& 2& 3& 2& 2& 3& 2& 3& 2& 2& 3& 2& 3& 2& 3 \\
\seqnum{A080754}$(n)$ & 0& 3& 5& 8&10&13&15&17&20&22&25&27&29&32&34&37&39\\
\seqnum{A082844}$(n)$ & 2&3&2&3&2&2&3&2&3&2&2&3&2&3&2&3&2\\
    \end{tabular}
    \caption{Seven OEIS sequences related to $\sqrt{2}$.}
    \label{sqrt2}
\end{table}

We start with \seqnum{A097508}$(n)$, whose definition is $\lfloor n(\sqrt{2} - 1) \rfloor$.
In the corresponding Ostrowski numeration system, the digits can be 0, 1 or 2. The shifting automaton {\tt shift1} can be built using Corollary~\ref{five}, and it is depicted in Figure~\ref{tm3}.
\begin{figure}[H]
\begin{center}
\includegraphics[width=4.5in]{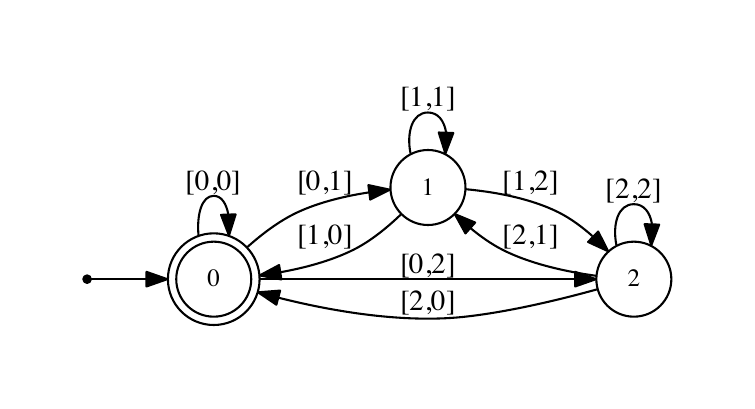}
\end{center}
\caption{Shift automaton.}
\label{tm3}
\end{figure}
Using Theorem~\ref{main}, we can obtain the synchronized automaton for $\lfloor n(\sqrt{2} - 1) \rfloor$ from the following {\tt Walnut} commands. 
\begin{verbatim} 
ost s2 [0] [2]:
# define the numeration system
def a097508 "?msd_s2 (n=0 & z=0) | (Eu,v n=u+1 & $shift1(u,v) & v=z+2*u)":
\end{verbatim}

Next we turn to OEIS sequence
\seqnum{A001951}$(n)$, defined by
$\lfloor n \sqrt{2} \rfloor$.
We apply Theorem~\ref{main1} using $a=1$, $b=1$, $c=1$, $d=0$, and $e=0$ to create the automaton for \seqnum{A001951} using the following {\tt Walnut} command.
\begin{verbatim} 
def a001951 "?msd_s2 Eu $a097508(n,u) & z=u+n":
\end{verbatim}

Next we turn to OEIS sequence
\seqnum{A003151}$(n)$, defined by $\lfloor n (\sqrt{2} + 1) \rfloor$.  We apply Theorem~\ref{main1} using $a=2$, $b=1$, $c=1$, $d=0$, and $e=0$ to create the automaton for \seqnum{A003151} using the following {\tt Walnut} command.
\begin{verbatim} 
def a003151 "?msd_s2 Eu $a097508(n,u) & z=u+2*n":
\end{verbatim}

Next we look at OEIS sequence
\seqnum{A276862}$(n)$, defined to be the
first difference of \seqnum{A003151}.  We can create a synchronized automaton for it
as follows:
\begin{verbatim}
def a276862 "?msd_s2 Eu,v $a003151(n,u) & $a003151(n+1,v) & z+u=v":
\end{verbatim}

Next we look at OEIS sequence \seqnum{A097509}, defined to be the number of times that $n$ occurs as a value of 
\seqnum{A097508}.  It is clear that this number is either $2$ or $3$, so we can build the synchronized automaton as follows:
\begin{verbatim}
def three_times "?msd_s2 Eu,v,w u<v & v<w & $a097508(u,n) & $a097508(v,n) 
   & $a097508(w,n)":
def a097509 "?msd_s2 (z=3 & $three_times(n)) | (z=2 & ~$three_times(n))":
\end{verbatim} 

Michel Dekking conjectured in 2020 that
$\seqnum{A097509}(n) = \seqnum{A276862}(n)$ for $n \geq 1$ (see the first comment
in \seqnum{A276862}).   We can prove this with the following {\tt Walnut} code:
\begin{verbatim}
eval dek "?msd_s2 An,x (n>=1) => ($a097509(n,x) <=> $a276862(n,x))":
\end{verbatim}
And {\tt Walnut} returns {\tt TRUE}.

Next we turn to OEIS sequence \seqnum{A080754}$(n)$, which is defined
to be $\lceil n (\sqrt{2} + 1) \rceil$.
\begin{verbatim}
def a080754 "?msd_s2 (n=0 & z=0) | (n>0 & Eu $a003151(n,u) & z=u+1)":
\end{verbatim}

We can now find a relationship between
\seqnum{A097509} and \seqnum{A080754};
namely,
\begin{equation}
\seqnum{A097509}(i) =  \seqnum{A080754}(i+1) - \seqnum{A080754}(i)
\label{fd1}
\end{equation}
for all $i \geq 0$.
\begin{verbatim}
eval check_equality "?msd_s2 Ai,u,v,w ($a080754(i+1,u) & $a080754(i,v) 
   & $a097509(i,w)) => w+v=u":
\end{verbatim}
And {\tt Walnut} returns {\tt TRUE}.

Finally, we consider $a(n) := \seqnum{A082844}(n)$.
It has a rather complicated definition, as follows:
\begin{equation}
a(t) = \begin{cases}
    3, & \text{if $t = 1$}; \\
    2, & \text{if $t = 0,2$}; \\
    a(n), & \text{if $t = a(1)+\cdots+a(n)$};\\
    2, & \text{if $a(t-1)=3$ and $\neg\exists n\geq 0 \ \ t=a(1)+\cdots+a(n)$};\\
    3, & \text{if $a(t-1)=2$ and $\neg\exists n\geq 0 \ \  t=a(1)+\cdots+a(n)$}.
\end{cases}
\label{defa}
\end{equation}
To get a formula for $a(n)$, we use a somewhat roundabout approach.  Namely, we
{\it define\/} the sequence $b(n)$ as $ \seqnum{A097509}(n+1)$ for $n \geq 0$,
and then show $b(n)$ satisfies the
recurrence Eq.~\eqref{defa}.   This will show that in fact $a(n) = b(n)$ for all
$n$.

This approach requires computing
$b(1)+\cdots + b(n)$, which we do as follows:
\begin{align*}
\sum_{1 \leq i \leq n} b(i) 
&= \sum_{1 \leq i \leq n} \seqnum{A097509}(i+1) \\
&= \sum_{1 \leq i \leq n} 
\left( \seqnum{A080754}(i+2) - \seqnum{A080754}(i+1) \right) \\
&= \seqnum{A080754}(n+2) - \seqnum{A080754}(2) \\
&= \seqnum{A080754}(n+2) -5.
\end{align*}
We then use the following {\tt Walnut} code:
\begin{verbatim}
def b2 "?msd_s2 $a097509(n+1,z)":
eval check1 "?msd_s2 An,u,v,w ($a080754(n+2,u) & $b2(u-5,v) & $b2(n,w)) 
   => v=w":
eval check2 "?msd_s2 At ($b2(t-1,3) & ~En $a080754(n+2,t+5)) => $b2(t,2)":
eval check3 "?msd_s2 At ($b2(t-1,2) & ~En $a080754(n+2,t+5)) => $b2(t,3)":
   \end{verbatim}
{\tt Walnut} returns {\tt TRUE} for all cases, which proves the equality of $a(n)$ and $b(n)$.

This also proves the conjecture of Benedict Irwin
from 2016, to the effect that
$$\seqnum{A082844}(n) = \seqnum{A097509}(n+1)$$ for all
$n \geq 0$ (see the comments to
\seqnum{A082844}).  
For more information, see 
\cite[2nd solution to problem B6]{Bhargava&Kedlaya&Ng:2021}.

\section{Fractional parts}
\label{fractional}

Let $\{ x \}$ denote the fractional part of $x$, that is, ${x \bmod 1} = x - \lfloor x \rfloor$.
Inspired by recent work of Khani, Valizadeh, and
Zarei \cite{Khani&Valizadeh&Zarei:2025},
we show how to handle
assertions like
$\lbrace \alpha x + \beta \rbrace < \lbrace \alpha' y + \beta' \rbrace$
in {\tt Walnut}, where $x$ and $y$ are natural number variables
and $\alpha, \alpha', \beta, \beta'$ all lie in the same quadratic field.
The key idea is the next very simple proposition.
\begin{proposition}
Let $x, y$ be real numbers.  Then 
$\{x\} < \{y\}$ if and only if $\lfloor x - y \rfloor < \lfloor x \rfloor - \lfloor y \rfloor$.
\label{prop8}
\end{proposition}

\begin{proof}
Write $x = \lfloor x \rfloor + \{ x \}$, $y = \lfloor y \rfloor + \{y \}$.
Then 
\begin{align*}
\{ x \} < \{ y \} & \iff x - \lfloor x \rfloor < y - \lfloor y \rfloor \\
& \iff  x-y < \lfloor x \rfloor - \lfloor y \rfloor \\
& \iff  \lfloor x-y \rfloor < \lfloor x \rfloor - \lfloor y \rfloor .
\end{align*}
\end{proof}

The virtue of Proposition~\ref{prop8} is that we can turn a first-order
logic statement involving comparison of the sizes of fractional parts
into a first-order statement involving floors.

\begin{example}
Let us create an automaton to decide if $\lbrace \alpha x \rbrace <
\lbrace \alpha y \rbrace$, where $\alpha = (1+\sqrt{5})/2$, the
golden ratio.  Once more we use the
automaton {\tt phin} that
accepts the Zeckendorf representation of the natural number pair
$(x,y)$ in parallel
if and only if $y = \lfloor \alpha x \rfloor$.

Translation of Proposition~\ref{prop8} into {\tt Walnut} involves a tiny
bit of extra effort, because the default domain for integers is $\Enn$ and not
$\Zee$.  Thus we need two cases, depending on whether $x > y$ or
$x < y$.

\begin{verbatim}
reg shift {0,1} {0,1} "([0,0]|[0,1][1,1]*[1,0])*":
def phin "?msd_fib (x=0 & y=0) | Ez $shift(x-1,z) & y=z+1":
def cfp "?msd_fib Er,s,t $phin(x,r) & $phin(y,s) & $phin(x-y,t) & t+s<r":
# decides if {alpha*x} < {alpha*y} provided x>y
def compare "?msd_fib (x>y & $cfp(x,y)) | (y>x & ~$cfp(y,x))":
\end{verbatim}
This generates an automaton {\tt compare} of 22 states.  The inputs
are $x$ and $y$ in Zeckendorf representation, most-significant-digit
first, and the input is accepted if and only if $\{ \alpha x \} <
\{ \alpha y \}$.

This implementation works for $x, y \geq 0$.  But we might want to also be able
to compare $\{ \alpha x \}$ and $\{ \alpha y \}$ where one or both of $x,y$
is negative.  We can do this using Proposition~\ref{prop8} together with
the identity $-\lfloor -x \rfloor = \lceil x \rceil$, and the fact that
$\lceil x \rceil = \lfloor x \rfloor + 1$ for $x \not\in \Zee$.
\end{example}

\begin{example}
Similarly we can check if $\{ \alpha x \} < c/d$ for quadratic
$\alpha$ and fixed natural number
constants $0 \leq c < d$.  By Proposition~\ref{prop8}
and Lemma~\ref{simple}
we have 
$$\{ \alpha x \} < c/d \iff  \left\lfloor {{{\lfloor \alpha d x \rfloor} - c} 
\over {d}} \right\rfloor < \lfloor \alpha x \rfloor .$$
For example, for $\alpha = (1+\sqrt{5})/2$ and $c=1$, $d = 2$,
we get an automaton to check if
$\{\alpha x \} < 1/2$, as follows:
\begin{verbatim}
def cmphalf "?msd_fib (x=0) | Ey,z $phin(2*x,y) & $phin(x,z) & (y-1)/2 < z":
\end{verbatim}
This gives an $8$-state automaton to decide if $\{ \alpha x \} < 1/2$.
\end{example}

Dan Asimov published a query on the math-fun mailing
list on September 23 2025, which we summarize as follows:  suppose $\alpha$ is an irrational number,
and $x,y,z$ are three integers.  Mapping the half-open interval $[0,1)$
to the unit circle in the standard way via $z \rightarrow \exp(2\pi i z)$,
how do we determine the relative order of the fractional parts
$\{ \alpha x \}$,
$\{ \alpha y \}$,
$\{ \alpha z \}$ on the circle, clockwise or counter-clockwise?  He was
particularly interested in the case $\alpha = \sqrt{2}$.

We can do this for any quadratic irrational using the ideas of this section,
because the relative order depends completely on the three pairwise comparisons
of the fractional parts of $\{ \alpha x \}$,
$\{ \alpha y \}$, and
$\{ \alpha z \}$.

We now illustrate this idea for $\sqrt{2}$.  From Section~\ref{sec12}
we have an automaton {\tt a001951} that computes the map
$n \rightarrow \lfloor \sqrt{2} n \rfloor$.
\begin{verbatim}
def cfp2 "?msd_s2 Er,s,t $a001951(x,r) & $a001951(y,s) & $a001951(x-y,t) & t+s<r":
def compare2 "?msd_s2 (x>y & $cfp2(x,y)) | (y>x & ~$cfp2(y,x))":
def order2 "?msd_s2 $compare2(x,y) & $compare2(y,z)":
def ccw "?msd_s2 $order2(x,y,z) | $order2(y,z,x) | $order2(z,x,y)":
\end{verbatim}

This gives us a $44$-state automaton {\tt ccw} that solves the problem,
accepting if the ordering for input
$(x,y,z)$ is counter-clockwise and rejecting otherwise.

\section{The converse to Theorem~\ref{main1}}
\label{converse-sec}

In this section we prove the converse to Theorem~\ref{main1}.

We need a lemma.   Recall that the subword complexity (aka factor complexity) of a word is the function
$\rho(n)$ counting the number of distinct length-$n$ factors.
\begin{lemma}
Suppose $0 <\alpha_i, \beta_i < 1$
are real numbers for $1 \leq i \leq k$.  Let $${\bf s}_i (n) = 
\lfloor (n+1) \alpha_i + \beta_i \rfloor - \lfloor n \alpha_i + \beta_i \rfloor$$
be the associated Sturmian word
with slope $\alpha_i$ and
intercept $\beta_i$.
Form the word $\bf s$ over the alphabet
$\{ 0,1 \}^k$ by
${\bf s}[n] = ({\bf s}_1 [n],\ldots,
{\bf s}_k [n])$.  If the
$k+1$ numbers
$$1, \alpha_1, \ldots, \alpha_k$$
are linearly independent
over $\Que$, then
$\bf s$ has subword complexity
$(n+1)^k$.
\end{lemma}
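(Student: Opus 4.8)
The plan is to realize $\mathbf{s}$ as the symbolic coding of a single rotation on the $k$-dimensional torus, to count the cells of the associated refined partition, and then to invoke the linear-independence hypothesis to guarantee that every cell is actually visited by the orbit.

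First I would rewrite each letter in dynamical terms. Since $0 < \alpha_i < 1$, the difference $\mathbf{s}_i(n) = \lfloor (n+1)\alpha_i + \beta_i\rfloor - \lfloor n\alpha_i + \beta_i\rfloor$ equals $1$ exactly when the interval $(n\alpha_i+\beta_i,\,(n+1)\alpha_i+\beta_i]$ contains an integer, that is, precisely when the fractional part $\{n\alpha_i+\beta_i\}$ lies in $[1-\alpha_i,1)$. Writing $R$ for the rotation $x \mapsto x + (\alpha_1,\ldots,\alpha_k)$ on the torus $(\mathbb{R}/\Zee)^k$ and $P = (\beta_1,\ldots,\beta_k)$ for the starting point, the symbol $\mathbf{s}[n]$ is then read off from the orbit point $R^n P$ via the product partition $\mathcal{P}$ whose $i$-th factor is $\{[1-\alpha_i,1),\,[0,1-\alpha_i)\}$.

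Next I would count the length-$n$ factors by counting cells. A length-$n$ factor beginning at position $m$ is determined by the $\mathcal{P}$-itinerary of $R^mP$ under $R^0,R^1,\ldots,R^{n-1}$, so two starting points give the same factor iff they lie in the same cell of the common refinement $\bigvee_{j=0}^{n-1} R^{-j}\mathcal{P}$. Because $\mathcal{P}$ is a product over the $k$ coordinates, and each coordinate's boundaries depend only on that coordinate, this refinement is again a product: in coordinate $i$ the two discontinuities $0$ and $1-\alpha_i$ pull back to the $n+1$ points $\{-j\alpha_i : 0 \le j \le n\}$ (mod $1$), which are distinct because $\alpha_i$ is irrational and hence cut the circle into exactly $n+1$ arcs. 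Thus the refinement has precisely $(n+1)^k$ cells, each a product of open arcs, and distinct cells yield distinct factors by construction.

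Finally, the decisive step is to show that all of these cells are genuinely realized by the orbit $(R^nP)_{n\ge 0}$, and this is exactly where the hypothesis is used: linear independence of $1,\alpha_1,\ldots,\alpha_k$ over $\Que$ says that no nontrivial $\mathbf{m}\in\Zee^k$ satisfies $m_1\alpha_1+\cdots+m_k\alpha_k\in\Zee$, so by the Kronecker--Weyl equidistribution theorem the orbit is equidistributed, hence dense, on $(\mathbb{R}/\Zee)^k$ for every starting point $P$. Consequently the orbit meets every nonempty open cell, each of the $(n+1)^k$ cells contributes an occurring factor, and the subword complexity is exactly $(n+1)^k$. I expect the main obstacle to be precisely this density argument together with the bookkeeping of the half-open boundaries, since without the independence hypothesis the orbit would collapse into a proper affine subtorus, leaving many product cells unvisited and dropping the complexity strictly below $(n+1)^k$.
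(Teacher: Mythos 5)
Your proposal is correct and follows essentially the same route as the paper: per-coordinate the length-$n$ factor is determined by which of $n+1$ arcs the orbit point falls into (the paper cites Knuth for this partition fact, while you derive it by refining the two-cell coding partition under the rotation), and then Kronecker's theorem on the density of $(n\alpha_1,\ldots,n\alpha_k)$ modulo $1$ — guaranteed by the linear independence of $1,\alpha_1,\ldots,\alpha_k$ — shows every product cell is visited, so all $(n+1)^k$ factors occur. The only cosmetic difference is that you phrase the argument via equidistribution on the $k$-torus and an explicit refined partition, whereas the paper picks target points $b_i$ in the interiors of the chosen intervals and applies Kronecker with a small $\epsilon$; the substance is identical.
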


\begin{proof}
By the multidimensional version of Kronecker's theorem (see, e.g., 
\cite[Thm.~442]{Hardy&Wright:1985}), 
the linear independence criterion implies
that for $\epsilon>0$ and all choices of the $b_i$,
there exist integers $q$ and $p_i$, $1 \leq i\leq k$ such that
$|\sum_{1\leq i\leq k} q \alpha_i - p_i + b_i| < \epsilon$.

It is known that, for all $n$, there exists a partition of $[0,1)$ into
$n+1$ intervals such that the length-$n$ factor of a Sturmian word starting at position $q$ is completely determined by 
which interval
$q \alpha_i +\beta_i$ lies in, modulo 1 \cite{Knuth:1972}.   In other words it is completely determined
by which of $n+1$ (circular) intervals $q\alpha_i$ lies in. Pick a particular length-$n$ factor in each of the $k$ Sturmian words.   For all $i$, $1 \leq i\leq k$, choose a point $b_i$ in the
corresponding
intervals that is at least $\epsilon$ away from the endpoints of the intervals, and choose $\epsilon$ sufficiently small.
Then by Kronecker's theorem some positive integer $ q$ exists that makes $q \alpha_i\bmod 1$ at distance at most $\epsilon$
from $b_i$, so it lies in the interior of the specified interval, so the $i$'th Sturmian word
beginning at position $q$ is the desired one.     Since this is true for each $i$, $1 \leq i \leq k$,
all 
$(n+1)^k$ possibilities for a factor of $\bf s$ must occur. 
\end{proof}

We can now prove the converse to Theorem~\ref{main1}.

\begin{theorem}
Suppose $0 < \gamma < 1$ is a quadratic irrational such that
$1/\gamma$ has a purely periodic continued fraction expansion.  
  Suppose the Beatty sequence
  $(\lfloor n \alpha + \beta \rfloor)_{n \geq 1}$ is $\gamma$-Ostrowski synchronized.  Then
$\alpha$ and $\beta$ both belong to  $\Que(\gamma)$.
Furthermore, the expression of both $\alpha$ and $\beta$
in terms of $\gamma$ is computable.
\label{main1-converse}
\end{theorem}

\begin{proof}
Suppose 
  $(\lfloor n \alpha + \beta \rfloor)_{n \geq 1}$ is $\gamma$-Ostrowski synchronized.
Our first goal is to prove that $\alpha \in \Que(\gamma)$.
Suppose $1,\alpha,\gamma$ are
linearly independent over $\Que$.
If the Beatty sequence
  $(\lfloor n \alpha + \beta \rfloor)_{n \geq 1}$ is $\gamma$-Ostrowski synchronized, then
the corresponding Sturmian sequence
(possibly recoded over the alphabet
$\{0,1\}$ if necessary) is $\gamma$-Ostrowski automatic.  
This is also true for the Sturmian
sequence associated with slope $\gamma$ and intercept $0$.   Then the product of these two sequences
(over the alphabet $\{0,1\}^2$)
is also $\gamma$-Ostrowski automatic.   Hence this product
has subword complexity $\Theta(n^2)$, but 
from Corollary~\ref{four},
we know that the subword complexity of every automatic sequence is $O(n)$.  

This contradiction shows that $1,\alpha,\gamma$ are linearly dependent over $\Que$, and hence $\alpha \in \Que(\gamma)$.  Therefore there are
integers $b,c,d$ with $d \geq 1$ such that that
$\alpha = (b\gamma + c)/d$.

We now show that we can computably determine $b,c$, and $d$.  To do so we iterate over all possible integer triples by increasing order of $|b|+|c|+d$.
For each triple $(b,c,d)$ we construct an $\gamma$-Ostrowski automaton for the Sturmian word with slope $(b\gamma+c)/d$ and intercept $0$.   We then check 
(using the logical approach) whether the set of factors occurring in Sturmian word is the same as that for the Sturmian word with slope $\alpha$ and intercept $\beta$.  Since all Sturmian words with the same slope have the same set of factors, we eventually find a suitable triple.  
Then $\alpha = (b\gamma+c)/d$ with $b, c, d$ known integers.  

We now turn to discussing $\beta$.
If $\beta = 0$ there is nothing more to prove.
Otherwise, without loss of generality, we may assume that
$0 < \beta < 1$.  
Put $x = n \alpha$ and $y = -\beta$ in
Proposition~\ref{prop8}.
Then we get that $\{n \alpha \} < \{-\beta \}$ if and only if
\begin{align}
\lfloor n \alpha+\beta \rfloor &= \lfloor n \alpha \rfloor \nonumber\\
&= \left\lfloor {{ bn \gamma+cn} \over d } \right\rfloor \label{gammaoa}\\
&= \left\lfloor {{ \lfloor bn \gamma \rfloor + cn} \over d} \right\rfloor .
\nonumber
\end{align}

By our hypothesis there is a computable
$\gamma$-Ostrowski automaton $A_1$ taking $n\geq 1$
as input and computing $\lfloor n \alpha + \beta \rfloor$.
Similarly, since we know $b,c,d$ we can construct
a $\gamma$-Ostrowski automaton taking $n\geq 1$
as input and computing
$$ \left\lfloor {{ \lfloor bn \gamma \rfloor + cn} \over d} \right\rfloor.$$
It follows that there is an automaton $A_2$ that takes $n$ as input
and checks if $\{n \alpha \} < \{-\beta \}$.

Now make a new automaton $A_3$ that accepts those integers $m \geq 1$
such that $m \equiv \modd{0} {d}$ and
\begin{equation}
m \equiv \modd{0} {d} \quad \text{ and } \quad
\{m \alpha \} < \{-\beta \} .
\label{crit}
\end{equation}
and further there is no $m'$, $0 < m' < m$, satisfying Eq.~\eqref{crit} such
that $\{m \alpha \} < \{m' \alpha\} $.
Roughly speaking, $A_3$
accepts those $m \equiv \modd{0} {d}$
such that $\{m \alpha \}$ consists of better and
better approximations to $\{ - \beta \}$.
Again by Kronecker's theorem the automaton $A_3$
accepts infinitely many $m$, and further the limit of
$\{m \alpha \}$ for $m$ accepted is $\{ - \beta \}$.
Define $A_4$ to be the automaton that on input $n$
multiplies $n$ by the constant $d$ and then calls
automaton $A_3$ on the result.   Thus $A_4$
also accepts infinitely many $n$ and the limit of
$\{ dn \alpha \} = \{ nb\gamma + cn \} = \{n b \gamma \}$
 for $n$ accepted is still $\{ -\beta \}$.
Now define $A_5$ so it accepts if its input $r$
satisfies $r \equiv \modd{0} {b}$ and $A_4$ accepts
$\lfloor r/b \rfloor$.  Thus $A_5$ accepts the Ostrowski representations
of an infinite sequence
of natural numbers $r_i$ such that $\{ r_i \gamma \}$
tends to $\{ -\beta \}$.  Hence, by the pumping lemma for
regular languages, there exist finite words $x', y', z'$, with $y'$ nonempty,
such that
$A_5$ accepts $x' {y'}^i z'$ for all $i \geq 0$.
Define $x = {z'}^R$, $y = {y'}^R$, $z = {x'}^R$, where the exponent of $R$
denotes reversal.

Now we need
an explicit expression for $\{ m \gamma \}$
in terms of the continued fraction $[a_0, a_1, \ldots]$ for an arbitrary
real number $\gamma$, using two basic results on continued
fractions that can be found, for example, in 
\cite[Chap.~1]{Rockett&Szusz:1992}.   First,
if $p_i/q_i$ is the $i$'th convergent, we have
$q_i \gamma - p_i = 
(-1)^i \gamma_0 \gamma_1 \cdots \gamma_i$, where 
$\gamma_i = [0,a_i, a_{i+1}, \ldots]$.
Second,
if the $\gamma$-Ostrowski representation of $m$
is $\sum_{0 \leq i \leq t} e_i q_i$, then
$$ \{ m \gamma \} = \sum_{0 \leq i \leq t} e_i (q_i \gamma - p_i) =
\sum_{0 \leq i \leq t} 
(-1)^i e_i \gamma_0 \gamma_1 \cdots \gamma_i.$$
(Here we are using the ``circular'' representation of fractional parts
where everything is understood to be taken mod $1$).

Now the particular $\gamma$ that concerns us has a purely periodic
continued fraction expansion of period length $P$, so
$\gamma_i = \gamma_{i+P}$ for all $i \geq 0$.  So there is a constant
$C<1$ such that $\gamma_i < C$ for all $i$.  

Define $m_j$ to be the integer with least-significant-digit first
Ostrowski representation
given by $x y^j z$.  Then from above we have
\begin{equation}
\{ m_j \gamma \} = \sum_{0 \leq i \leq t}
(-1)^i e_i \gamma_0 \gamma_1 \cdots \gamma_i 
\label{mjgamma}
\end{equation}
where $e_0 e_1 \cdots e_t = x y^j z$, and
further $\lim_{j \rightarrow \infty} \{ m_j \gamma \} = \{ -\beta \}$.

Let the infinite word $f_0 f_1 \cdots = xy^\omega = xyyy\cdots$.
Consider the sum
\begin{equation}
S = \sum_{i \geq 0}
(-1)^i f_i \gamma_0 \gamma_1 \cdots \gamma_i .
\label{seq}
\end{equation}
By comparison with Eq.~\eqref{mjgamma} we see that
$|S - \{ m_j \gamma \}| = O(C^{|xy^j|})$, which is exponentially
small in $j$.  It follows that
$S = \lim_{j \rightarrow \infty} \{ m_j \gamma \} = \{ -\beta \}$.

Now the terms $f_i$ are ultimately periodic with period $|y|$,
and the $\gamma_i$ are periodic with period $P$.  So 
Eq.~\eqref{seq} expresses $S$ as the sum of finitely many 
geometric series of the form $\sum_{i \geq 0}
c_{\ell} \gamma_\ell^{P|y|i}$. Here each $c_{\ell}$ is
a finite product of the $\gamma_t$, and hence in $\Que(\gamma)$.
By the elementary formula for the sum of a geometric series,
each series sum is in $\Que(\gamma)$, and hence so is $S = \{ -\beta \}$ and
hence so is $\beta$ itself.  It is now easy to see that
the specific expression of
$\beta$ in terms of $\gamma$ is computable from the automaton.
\end{proof}

\section*{Acknowledgments}

We are grateful to Aaron Barnoff for some corrections and for a question that led to the formulation and proof of Theorem~\ref{main}, and to Jean-Paul Allouche for suggesting we look at the papers \cite{Hildebrand&Li&Li&Xie:2019,Russo&Schwiebert:2011,Gunaydin&Ozsahakyan:2022}.

We acknowledge the support of the 
Natural Sciences and Engineering Research Council of Canada (NSERC).
Nous remercions le Conseil de recherches en sciences naturelles et en g\'enie du Canada (CRSNG) de son soutien.
LS was supported by NSERC grant
RGPIN-2025-04875 and
JOS was supported by grants RGPIN-2018-04118 and
RGPIN-2024-03725.

\end{document}